\documentclass{amsart}
\usepackage[english]{babel}
\usepackage{amsthm, amssymb, amsmath, mathrsfs, tikz-cd, mathtools, enumitem, indentfirst}
\usepackage[all]{xy}

\newtheorem{theorem}{Theorem}

\newtheorem{lemma}[theorem]{Lemma}
\newtheorem{proposition}[theorem]{Proposition}

\theoremstyle{definition}
\newtheorem{definition}{Definition}
\newtheorem{notation}{Notation}
\newtheorem*{acknowledgments}{Acknowledgments}

\theoremstyle{remark}

\newcommand{\Z}{\mathbb{Z}}
\newcommand{\C}{\mathbb{C}}
\newcommand{\g}{\mathfrak{g}}
\newcommand{\h}{\mathfrak{h}}
\newcommand{\m}{\mathfrak{m}}
\newcommand{\n}{\mathfrak{n}}

\title[EC structures on certain horospherical varieties]{Equivariant compactifications of a unipotent group by a smooth projective horospherical variety of Picard number one}
\author{Hyukmoon Choi}
\address{Department of Mathematical Sciences, KAIST, 291 Daehak-ro, Yuseong-gu, Deajeon, 34141, Republic of Korea
}
\email{gsm08056@kaist.ac.kr}
\date{August 25, 2026}
\keywords{equivariant compactification, unipotent group, horospherical variety}
\subjclass[2010]{14L30, 53C15 and 32M12}

\begin{document}

\begin{abstract}
Cheong proved that if $S$ is a simple Lie group and $P$ is a parabolic subgroup, then $S/P$ admits a unique equivariant compactification of the unipotent radical $N$ of $P$, up to isomorphism, provided that $S/P$ is not isomorphic to a projective space. We generalize this result to a smooth projective horospherical variety $X$ with Picard number 1. Let $G = \mathrm{Aut}(X)$, and let $H$ be the isotropy subgroup at a point in the open $G$-orbit. We describe a unipotent subgroup $N$ of $H$, which is not necessarily the unipotent radical of $H$ when $X$ is not homogeneous. Moreover, we prove that $X$ admits a unique equivariant compactification of $N$, up to isomorphism.  
\end{abstract}

\maketitle

\section{Introduction}

Throughout, we will work over the field $\C$. An \textit{equivariant compactification} of an algebraic group $H$ is a projective $H$-variety $X$ containing $H$ as a Zariski-open orbit, where the induced action on the open orbit agrees with left multiplication. We refer to such an action as an EC-structure on the projective variety $X$. The purpose of this paper is to construct a distinguished unipotent subgroup $N$ of the automorphism group of a smooth projective nonhomogeneous horospherical variety $X$ of Picard number 1 and to prove that all EC-structures of $N$ on $X$ are isomorphic by $\mathrm{Aut}(X)$.

In \cite{HT}, Hassett and Tschinkel studied equivariant compactifications of a vector group $\C^n$ on $\mathbb{P}^n$. They established a correspondence between EC-structures of $\C^n$ on $\mathbb{P}^n$ and Artinian local algebras of length $n+1$. This correspondence shows that EC-structures of $\C^n$ on $\mathbb{P}^n$ are not unique up to isomorphism for $n \geq 2$, and that infinitely many isomorphism classes occur for $n \geq 6$.

In contrast, Fu and Hwang proved in \cite{FH} a uniqueness theorem for EC-structures of $\C^n$ on certain Fano manifolds of Picard number 1. More precisely, if $M\not\cong\mathbb{P}^n$ is a Fano manifold of Picard number 1 whose variety of minimal rational tangents is smooth, then any two EC-structures of $\C^n$ on $M$ are isomorphic.

Following the strategy of Fu and Hwang, Cheong considered equivariant compactifications of a unipotent group on a rational homogeneous space in \cite{C}. Let $\mathfrak{s} = \bigoplus_{k \in \Z} \mathfrak{s}_k$ be a simple graded Lie algebra and let $S$ be the adjoint algebraic group with Lie algebra $\mathfrak{s}$. Let $P$ be a parabolic subgroup of $S$ with Lie algebra $\mathfrak{p} = \bigoplus_{k\geq 0} \mathfrak{s}_k$, and let $P_u$ be the unipotent radical of $P$. Cheong proved that if $\mathfrak{s}$ is not isomorphic to $(A_l, \{\alpha_1\}), (C_l, \{\alpha_1\})$, then there exists a unique EC-structure of $P_u$ on $S/P$ up to isomorphism.

We extend Cheong's result from rational homogeneous spaces to smooth projective nonhomogeneous horospherical varieties of Picard number 1. Let $L$ be a connected reductive algebraic group. A normal $L$-variety is called \textit{horospherical} if there exists an open dense $L$-orbit that is isomorphic to a torus bundle over a generalized flag variety of $L$.

To motivate our construction, recall the homogeneous case $S/P$. Let $P^{-}$ be the opposite parabolic subgroup of $P$. Then $P \cap P^-$ is a Levi subgroup of $P$, and 
\[P = (P\cap P^-) \ltimes P_u\]
for the unipotent radical $P_u$ of $P$.
Consequently, 
\[P_u \cong P / (P \cap P^-)\]
as an algebraic variety.

We obtain an analogous construction for horospherical varieties. Let $X$ be a smooth projective nonhomogeneous horospherical $L$-variety of Picard number 1 and put 
\[G = \mathrm{Aut}(X).\]
In \cite{P}, Pasquier showed that the action of $L$ on $X$ has two closed orbits and one open orbit, whereas the action of $G$ has one closed orbit and one open orbit. Let $Y$ be the closed $L$-orbit contained in the open $G$-orbit. Let $o$ be a point in $Y$ and $H$ the isotropy subgroup of $G$ at $o$. Let $w_0$ be the longest element in the Weyl group of $L$ and set $H^-:= w_0Hw_0^{-1}$. We construct a unipotent subgroup $N \subset H$ such that 
\[N \cong H^- /(H \cap H^-)\]
as an algebraic variety (Lemma \ref{unipotent}). The construction is carried out in Section \ref{sec:Existence}. We note that unlike the homogeneous case, the intersection 
\[H \cap H^-\]
may not be reductive. Consequently, $N$ need not coincide with the unipotent radical of $H^-$. Nevertheless, $N$ is a subgroup of the unipotent radical $H_u^-$ of $H^-$ and
\[N \cong H_u^- /(H_u \cap H_u^-)\]
as algebraic varieties where $H_u$ is the unipotent radical of $H$ (Lemma \ref{unipotent}).

Now, we state our main result.

\begin{theorem}
\label{main}
Let $X$ be a smooth projective nonhomogeneous horospherical variety with Picard number 1. Then $X$ admits a unique EC-structure of $N$.
\end{theorem}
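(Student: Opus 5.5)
The proof splits into two parts: existence of an EC-structure of $N$ on $X$, and uniqueness up to the action of $G=\mathrm{Aut}(X)$.

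\emph{Existence.} We use Lemma \ref{unipotent}, which gives $N\subset H$ with $N\cong H^-/(H\cap H^-)$. Let $o^-:=w_0\cdot o$, whose isotropy group is $H^-$. We act by $N$ on $X$ and look at the orbit $N\cdot o^-$. Since $N\cap H^-$ is trivial, the orbit map $N\to X$, $n\mapsto n\cdot o^-$, is injective. Next we need $\dim N=\dim X$, which should follow from $\dim H^-/(H\cap H^-) = \dim G/H$; this is a dimension count of the type $\mathfrak g=\mathfrak h+\mathfrak h^-$. We prove it at the Lie algebra level, using Pasquier's explicit description of $\mathfrak{g}=\mathrm{Lie}\,\mathrm{Aut}(X)$ as $\mathfrak l\oplus V$ with $V$ a representation (Pasquier's classification: $(B_m,\omega_{m-1},\omega_m)$, $(B_3,\omega_1,\omega_3)$, $(C_m,\omega_i,\omega_{i+1})$, $(F_4,\omega_2,\omega_3)$, $(G_2,\omega_1,\omega_2)$). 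From this we get $\mathfrak n\oplus\mathfrak h^-=\mathfrak g$. It follows that $N\cdot o^-$ is open and $N$ acts freely on it. Since $X$ is projective, this gives an EC-structure.

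\emph{Uniqueness.} We follow the Fu--Hwang/Cheong strategy. Take an arbitrary EC-structure, i.e.\ an injective homomorphism $\rho:N\to G$ with an open orbit on which $N$ acts freely. The first step is to show that, after conjugation in $G$, $\rho(N)$ lies in a fixed maximal unipotent subgroup, and in fact inside $H$. The argument: $\rho(N)$ is unipotent, hence contained in a Borel subgroup of $G$. Since $G$ has a unique closed orbit $Z$, $\rho(N)$ has a fixed point on $Z$ by Borel's fixed point theorem. This means $\rho(N)$ sits in the isotropy group of a point of $Z$; we then have to relate this to $H$. Because $N$ is unipotent of dimension $\dim X$, we compare it to a Borel-type unipotent subgroup and use the grading of $\mathfrak g$ coming from a suitable cocharacter. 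The second step is to differentiate: $d\rho(\mathfrak n)$ is a subalgebra of $\mathfrak g$ isomorphic to $\mathfrak n$ that is complementary to $\mathfrak h^-_{x}$ at the base point. Writing $\mathfrak n$ as graded, $\mathfrak n=\bigoplus_{k>0}\mathfrak n_k$, the image is $d\rho(\mathfrak n)=\{x+\phi(x)\}$, where $\phi$ raises degree. The goal is to show that $\phi$ can be killed by conjugation by an element of $H\cap H^-$ or of the unipotent part of $G$. This is a Lie algebra cohomology vanishing: the obstruction lives in a positive-degree piece of $H^1(\mathfrak n,\mathfrak g/\mathfrak n)$, which must be shown to be represented by inner derivations. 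This computation is done via Kostant's theorem for the Levi part $\mathfrak l$ together with a direct check for the extra module $V$.

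I expect the main obstacle to be this cohomology vanishing. Kostant's theorem does not apply directly, because $\mathfrak g$ is not simple and $H\cap H^-$ is not reductive. I would first reduce, using the $\mathbb C^*$-action that commutes with $L$ (the horospherical torus), to bidegree components. I would then check the few cases of Pasquier's list individually, comparing with Cheong's vanishing for the homogeneous $L/P$ pieces, namely the two closed $L$-orbits.
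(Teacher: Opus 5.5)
\textbf{There is a genuine gap.} Your reformulation of uniqueness as $G$-conjugacy of the subgroups $\rho(N)\subset G$ is legitimate. However, both halves of the proposal fail at specific points.

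\emph{Existence.} You have $H$ and $H^-$ interchanged. Lemma~\ref{unipotent} puts $N$ inside $H^-$; indeed $\n=\bigoplus_{k<0}\mathfrak{l}_k\oplus V_{-\mu}\subset\h^-$. So $N$ fixes $o^-=w_0\cdot o$, the orbit $N\cdot o^-$ is a point, and $\n\oplus\h^-=\g$ is false. What holds is $\n\cap\h=0$ and $\n\oplus\h=\g$. The open orbit is therefore $N\cdot o$, and its stabilizer is trivial because $N\cap H$ is unipotent, hence connected, with zero Lie algebra.

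\emph{Uniqueness, setup.} The Borel fixed point on $Z$ leads nowhere, since points of $Z$ have stabilizers of dimension larger than $\dim H$. It is also unnecessary: the open $\rho(N)$-orbit meets $X'$, on which $G$ is transitive. So after conjugation $d\rho(\n)\oplus\h=\g$, and $d\rho(\n)$ is the graph of a positive-degree map $\phi\colon\n\to\h$.

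\emph{Uniqueness, the decisive problem: the cohomology step.} The group you name, $H^1(\n,\g/\n)$ in positive degree, does not vanish. In degree one, every linear map $\g_{-1}\to\g_0$ (extended by zero) is a cocycle, because every term of the cocycle identity lies in $\n$ and so vanishes in $\g/\n$. The coboundaries, by contrast, come only from $\g_1$, so this group is large. It parametrizes deformations of $\n$ as a subalgebra whose isomorphism type is allowed to change, and your argument never uses $d\rho(\n)\cong\n$. Your leading-term induction also presupposes an isomorphism $\n\to d\rho(\n)$ that is filtered with identity associated graded, and that is not automatic.

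\emph{The missing idea.} The relevant group is $H^1(\n,\g)$, which governs deformations of the homomorphism $\n\hookrightarrow\g$. Its positive-degree vanishing is already in the paper, with no Kostant-type or case-by-case computation needed:
\begin{itemize}
\item A cocycle $\n\to\g$ of degree $d\geq1$ is, up to sign, an element of $\g_d(\n,\g_0)$ taking values in $\g$, and the coboundaries are $\mathrm{ad}(\g_d)$. Hence $H^1(\n,\g)_d=0$ for all $d\geq1$ is equivalent to Proposition~\ref{prolongation}.
\item Cocycles of degree $\leq 0$ take values in $\n$ and are derivations of $\n$. So $Z^1(\n,\g)=B^1(\n,\g)+\mathrm{Der}(\n)$, and the $G\times\mathrm{Aut}(\n)$-orbit of the inclusion is open in $\mathrm{Hom}(\n,\g)$.
\item The grading automorphism of $\g$ (multiplication by $s^k$ on $\g_k$) fixes $\n$ and normalizes $\mathrm{Ad}(G)$. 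It degenerates $d\rho(\n)$ to $\n$ as $s\to0$.
\item A Nijenhuis--Richardson type rigidity argument then gives $d\rho(\n)\in \mathrm{Ad}(G)\cdot\n$, and this bypasses the filtration issue.
\end{itemize}
That would be a genuinely algebraic alternative, avoiding both the VMRT and the extension across $Z$.

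\emph{Comparison with the paper.} The paper feeds the same prolongation input into Tanaka theory instead. The VMRT cuts $\g_0(\n)$ down to $\g_0$ (Proposition~\ref{g_0}), which yields $\mathcal{A}_{\mathscr{C}}(U)=\g$ on connected open subsets $U\subset X'$. An $F$-equivariant map $\Psi\colon O_1\to O_2$ preserving $D_\g$ and $\mathscr{C}$ is then integrated to $X'$ and extended across $Z$.
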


The existence of an EC-structure follows from the gradation of the Lie algebra of $G$. More precisely, the negative part of the gradation integrates to the unipotent subgroup $N$, and we show that the $N$-orbit through $o$ is Zariski open in $X$. To prove the uniqueness, we generalize the approach of \cite{C} using a $G$-invariant differential system and the VMRT structure on the open $G$-orbit. Using Tanaka prolongation, we identify the Lie algebra of local infinitesimal automorphisms preserving these structures with the Lie algebra of $G$. This allows us to extend an equivariant biholomorphism between two open $N$-orbits to an automorphism of $X$. This argument can also be applied to show that the blowup of $X$ along the closed $G$-orbit admits a unique EC-structure of $N$ (Theorem \ref{blowup}).

The paper is organized as follows. In Section 2, we recall the classification and basic properties of smooth projective nonhomogeneous horospherical varieties of Picard number 1. In Section 3, we construct the unipotent subgroup N and prove the existence of an EC-structure of $N$ on $X$. Section 4 reviews regular differential systems, Tanaka prolongation, and varieties of minimal rational tangents. In Section 5, we determine the infinitesimal automorphisms preserving the relevant differential system and VMRT. Finally, in Section 6, we prove the uniqueness of the EC-structure of $N$ on $X$.

\begin{acknowledgments}
The author thanks Professor Jun-Muk Hwang for suggesting the problem and for helpful guidance and comments throughout this work. The author thanks Professor Jaehyun Hong for detailed comments on an earlier draft of this paper and for valuable discussions. The author also thanks Dr. Zhijun Luo for helpful comments. This work is supported by the Institute for Basic Science (IBS-R032-D1).
\end{acknowledgments}

\section{Preliminaries}

We begin with definitions of equivariant compactifications and horospherical varieties.

\begin{definition}
Let \(H\) be an algebraic group. A projective variety \(M\) is called an \textit{equivariant compactification} of \(H\) if there is an algebraic \(H\)-action \(A: H \times M \to M\) with a Zariski-open orbit, that is equivariantly biregular to \(H\).

Given a projective variety \(M\), an algebraic action $A:H \times M \to M$ is called an EC-structure on \(M\) if $M$ is an equivariant compactification of $H$.

An isomorphism between two EC-structures \(A_i : H \times M \to M\) for \(i=1,2\) consists of an automorphism \(F : H \to H\) and a biregular morphism \(\Psi : M \to M\) such that the diagram
\[
\begin{tikzcd}
H \times M \arrow[r, "A_1"] \arrow[d, "F\times \Psi"]
  & M \arrow[d, "\Psi"] \\
H \times M \arrow[r, "A_2"]
  & M
\end{tikzcd}
\]
commutes.
\end{definition}

\begin{definition}
Let $L$ be a connected reductive algebraic group. A closed subgroup $K$ of $L$ is called \textit{horospherical} if it contains a unipotent radical of a Borel subgroup of $L$. A normal $L$-variety is called \textit{horospherical} if there exists an open dense $L$-orbit which is equivariantly isomorphic to $L/K$ for some horospherical subgroup $K$. 
\end{definition}

We recall the classification theorem for smooth projective nonhomogeneous horospherical varieties of Picard number 1.

\begin{theorem}[Theorem 0.1 and Theorem 1.11 of \cite{P}]
\label{Pasquier}
Let \(L\) be a connected reductive algebraic group. Let \(X\) be a smooth projective horospherical \(L\)-variety of Picard number 1. Assume that \(X\) is not homogeneous. Then, \(X\) is uniquely determined by its two closed \(L\)-orbits \(Y\) and \(Z\), isomorphic to \(L/P(\alpha)\) and \(L/P(\beta)\), respectively; and \((L, \alpha, \beta)\) is one of the triples of the following list.

\begin{enumerate}
\item \((B_m, \alpha_{m-1}, \alpha_m)\) with $m \geq 3$
\item \((B_3, \alpha_1, \alpha_3)\)
\item \((C_m, \alpha_i, \alpha_{i+1})\) with \(m \geq 2\) and \(i \in \{1, ..., m-1\}\)
\item \((F_4, \alpha_2,\alpha_3)\)
\item \((G_2, \alpha_2, \alpha_1)\)
\end{enumerate}
Moreover, the automorphism group of \(X\) is \((\text{SO}(2m+1) \times \C^*) \ltimes V(\omega_m), (\text{SO}(7) \times \C^*) \ltimes V(\omega_3), ((\text{Sp}(2m) \times \C^*)/\{\pm1\}) \ltimes V(\omega_1), (\text{F}_4 \times \C^*) \ltimes V(\omega_4), (\text{G}_2 \times \C^*) \ltimes V(\omega_1)\), respectively.
\end{theorem}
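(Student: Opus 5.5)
The statement is Pasquier's classification, cited from \cite{P}, so the plan is to reconstruct his argument in the language of spherical (horospherical) embeddings. It has three stages: combinatorial encoding, smoothness analysis, and computation of the automorphism group.

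\emph{Combinatorial encoding.} Fix a Borel subgroup $B \subset L$ and write the open orbit as $L/K$ with $K$ horospherical. By the standard description, $N_L(K) = P(I)$ is a parabolic subgroup and $K$ is determined by $P(I)$ together with a lattice $M$ of characters of $P(I)/K$. The colors of $L/K$ correspond to the simple roots not in $I$. Luna--Vust theory then says that a projective horospherical embedding $X$ is determined by a complete colored fan in $N_\Q = \operatorname{Hom}(M,\Q)$. The first task is to impose Picard number one. I would show that this forces $\operatorname{rank} M = 1$, that the fan consists of the two half-lines of $\Q$, and that each half-line carries a nonempty set of colors. For $X$ to be nonhomogeneous there must be exactly two closed $L$-orbits, one for each half-line. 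These are $Y \cong L/P(\alpha)$ and $Z \cong L/P(\beta)$, where the colored cones are generated by the images of $\alpha^\vee$ and $\beta^\vee$ up to sign. Thus $X$ is determined by $(L,\alpha,\beta)$, which gives the uniqueness clause. A preliminary reduction is that $L$ may be taken simple: otherwise the smoothness condition below forces a product decomposition, and the Picard number would exceed one.

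\emph{Smoothness analysis.} I would apply Pasquier's local smoothness criterion (the horospherical version of Brion's criterion) at each closed orbit. Near $Y$, the variety $X$ is a $P(\alpha)$-equivariant fibration whose fiber is the affine horospherical $L_\alpha$-variety given by the cone. Smoothness holds exactly when this fiber is a $L_\alpha$-module, which happens if and only if the relevant highest weight, restricted to the Levi of $P(\alpha)$, is that of a standard (minuscule-type) representation in which the colored cone becomes a line. Concretely, $\alpha$ must lie in a connected component of the Dynkin diagram of $I \cup \{\alpha\}$ of type $A$ or $C$ with $\alpha$ at a prescribed end, and symmetrically for $\beta$. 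Running through all Dynkin types and pairs of simple roots then gives a finite case check. Among the surviving pairs, those for which $X$ is actually homogeneous under a larger group must be discarded: these are the cases in which $X$ turns out to be a Grassmannian or a quadric, for instance $(A_m,\alpha_i,\alpha_{i+1})$ or the symmetric pairs. What remains should be exactly the list (1)--(5). I expect this case-by-case elimination, and in particular recognizing the hidden homogeneous cases, to be the main obstacle. I would handle it by comparing $\dim X$ and the closed orbits with the known rational homogeneous spaces of Picard number one.

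\emph{Automorphism group.} Here I would use the description of $X$ as the closure of the $L$-orbit of $[v_\alpha + v_\beta]$ in $\mathbb{P}(V(\omega_\alpha)\oplus V(\omega_\beta))$, where $v_\alpha, v_\beta$ are highest weight vectors. The group $\operatorname{Aut}(X)$ is connected modulo finite factors and acts on $H^0(X,\mathcal{O}(1))$. Its Lie algebra consists of vector fields and is computed via Demazure's root-system method for the $B$-stable structure. It contains $\mathfrak{l} \oplus \C$, where $\C$ is the torus scaling $V(\omega_\beta)$, together with a unipotent part coming from $\operatorname{Hom}(V(\omega_\alpha),V(\omega_\beta))$ directions that preserve $X$. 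That unipotent part is the $L$-module $V(\omega_m)$, $V(\omega_3)$, $V(\omega_1)$, $V(\omega_4)$, or $V(\omega_1)$, respectively. I would show that these exhaust the vector fields by bounding $H^0(X,T_X)$ through its restriction to the closed orbit $Z$, which then becomes the unique closed $G$-orbit, and the stated semidirect products follow.
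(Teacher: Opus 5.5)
The paper gives no proof of this statement; it is imported from \cite{P}. So the question is whether your outline would prove it, and it has two genuine gaps.

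\textbf{First gap: the reduction to rank one.} Picard number one does \emph{not} force $\operatorname{rank} M=1$, nor does it force colored half-lines. For example, $\mathbb{P}^n$ viewed as a toric variety, or $\mathbb{P}(V_0\oplus\cdots\oplus V_n)$ under $\prod_i GL(V_i)$, is a smooth projective horospherical variety of Picard number one and rank $n$. Also, $\mathbb{P}(V\oplus\C)$ under $GL(V)$ has rank one with an uncolored half-line.

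For a smooth complete embedding, $\rho(X)$ is the number of $L$-stable prime divisors plus the number of colors minus $\operatorname{rank}M$. So $\rho(X)=1$ says only that, for $n\ge 1$, the fan has exactly $n+1$ rays, each colored ray carries exactly one color, and no color is left unused.

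To reach rank one with both half-lines colored, you must use smoothness \emph{and} the non-homogeneity hypothesis, which your first stage never invokes. Smoothness makes the fan that of $\mathbb{P}^n$. For $n\ge 2$, any two colors lie in a common maximal colored cone. The smoothness criterion then puts the colors in pairwise distinct simple factors of $L$, each of type $(A,\omega_1)$ or $(C,\omega_1)$, and $X$ is a projective space.

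Your reduction to simple $L$ has the same defect. If $\alpha$ and $\beta$ lie in different simple factors, the Picard number is still $2-1=1$. What smoothness actually gives is $X\cong\mathbb{P}(V(\omega_\alpha)\oplus V(\omega_\beta))$, which is excluded because it is homogeneous (simple factors containing no color act trivially).

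Likewise, ``$X$ is determined by $(L,\alpha,\beta)$'' needs smoothness to fix $M$: the restrictions of $\alpha^\vee$ and $\beta^\vee$ to $M$ must be $\pm$ a generator of $\operatorname{Hom}(M,\Z)$. The colored fan alone does not do this.

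\textbf{Second gap: non-homogeneity of the five families.} Comparing $\dim X$ and the closed orbits with rational homogeneous spaces of Picard number one can never show that a given $X$ is \emph{not} homogeneous. For instance, $(B_3,\alpha_1,\alpha_3)$ has dimension $9$, like $\mathrm{Gr}(3,6)$. For the surviving homogeneous cases, the comparison is useless unless you exhibit explicit isomorphisms:
\begin{itemize}
\item $(A_m,\alpha_i,\alpha_{i+1})$ is $\mathrm{Gr}(i+1,m+2)$;
\item $(A_m,\alpha_1,\alpha_m)$ is the quadric $Q^{2m}$;
\item $(D_m,\alpha_{m-1},\alpha_m)$ and its $D_4$ triality twins are spinor varieties.
\end{itemize}

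Non-homogeneity has to come from $\mathrm{Aut}(X)$ itself. It follows, for example, from the $\mathrm{Aut}(X)$-stability of $Z$ (Proposition~\ref{stable}), or from $\mathrm{Aut}^0(X)$ being non-reductive, since $\mathrm{Aut}^0$ of a rational homogeneous space is semisimple. So your third stage is logically required to finish the second.

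As written, the third stage is not an argument. Exhibiting $V(\omega_m),\dots$ inside $\operatorname{Hom}(V(\omega_\alpha),V(\omega_\beta))$ gives only a lower bound for $H^0(X,T_X)$. ``Bounding via restriction to $Z$'' presupposes that every vector field is tangent to $Z$ and that you control the fields vanishing along $Z$, and that is precisely the nontrivial content. The passage from the Lie algebra to the stated group (the component group and the exact semidirect product) is also not addressed.
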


\begin{proposition}[Lemma 1.15 and Remark 1.16 of \cite{P}]
\label{stable}
In the notation of Theorem \ref{Pasquier}, $Z$ is stable under $\mathrm{Aut}(X)$ but $Y$ is not stable under $\mathrm{Aut}(X)$.
\end{proposition}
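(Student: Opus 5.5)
The plan is to use the explicit projective model of $X$ from \cite{P} together with the orbit counts of Theorem \ref{Pasquier}. Recall that $X$ is the closure of the $L$-orbit of $[v_\alpha + v_\beta]$ in $\mathbb{P}(V(\omega_\alpha)\oplus V(\omega_\beta))$. Here $v_\alpha, v_\beta$ are highest weight vectors, and $\omega_\alpha,\omega_\beta$ are the fundamental weights attached to $\alpha$ and $\beta$. In this model
\[
Y = L\cdot[v_\alpha]\subset \mathbb{P}(V(\omega_\alpha))
\qquad\text{and}\qquad
Z = L\cdot[v_\beta]\subset \mathbb{P}(V(\omega_\beta)).
\]
The $L$-orbits of $X$ are then exactly the open orbit, $Y$ and $Z$.

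\textbf{Step 1: exactly one of $Y,Z$ is $G$-stable.} Since $L \subset G=\mathrm{Aut}(X)$, every $G$-orbit is $L$-stable, hence a union of $L$-orbits. By \cite{P}, $G$ has exactly two orbits: a closed one and an open one. If both $Y$ and $Z$ were $G$-stable, the $G$-orbits would be the three $L$-orbits, which is a contradiction. If neither were $G$-stable, then $X$ would be a single $G$-orbit, i.e.\ homogeneous, again a contradiction. Hence the closed $G$-orbit is one of $Y,Z$, and the other one lies in the open $G$-orbit. It remains to decide which.

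\textbf{Step 2: identify the closed orbit.} I would use the structure $G=(L'\times\C^*)\ltimes V(\omega)$ from Theorem \ref{Pasquier} and realize the unipotent radical $V(\omega)$ concretely on the ambient projective space. The aim is to show that it acts on $V(\omega_\alpha)\oplus V(\omega_\beta)$ by unipotent maps of the form
\[
(x,y)\mapsto (x, y+\phi_u(x)),
\]
where $u\mapsto \phi_u$ is an $L$-equivariant map $V(\omega)\to \mathrm{Hom}(V(\omega_\alpha),V(\omega_\beta))$. This fits the weights: e.g.\ in type $C_m$, $V(\omega_1)\otimes V(\omega_i)$ contains $V(\omega_{i+1})$, and the other cases are analogous.

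Granting this description, $\mathbb{P}(V(\omega_\beta))$ is pointwise fixed by $V(\omega)$. Since $L\times \C^*$ preserves the decomposition, $Z$ is $G$-stable. On the other hand, $u\cdot [v_\alpha]=[v_\alpha+\phi_u(v_\alpha)]$ leaves $\mathbb{P}(V(\omega_\alpha))$ as soon as $\phi_u(v_\alpha)\neq 0$. Such a $u$ exists: the $V(\omega)$-action is nontrivial, so $\phi \neq 0$, and since the $L$-span of $v_\alpha$ is all of $V(\omega_\alpha)$, not every $\phi_u$ can kill $v_\alpha$. Therefore $Y$ is not $G$-stable.

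As a cross-check independent of the model, I would also argue as follows. The closed $G$-orbit is a flag variety $G/Q$. The normal unipotent radical $V(\omega)$ lies in every parabolic $Q$, so it acts trivially on the closed orbit. One can then verify which of $Y,Z$ is pointwise fixed by $V(\omega)$ by comparing the tangent action of $\mathrm{Lie}\,V(\omega)$ at the base points.

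\textbf{Main obstacle.} The hard part is Step 2, namely pinning down how $V(\omega)$ acts, in particular the direction of the maps: $V(\omega_\alpha)\to V(\omega_\beta)$ and not the reverse. I would first extract this from Pasquier's construction of the extra automorphisms, which come from the nonreductive part of the automorphism group of the pair of representations, and verify it through weight considerations case by case in the list of Theorem \ref{Pasquier}. The $G_2$ and $(B_3,\alpha_1,\alpha_3)$ cases need a separate check because of nonstandard embeddings.
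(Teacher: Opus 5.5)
The paper gives no argument for this statement; it is quoted from Pasquier (Lemma 1.15 and Remark 1.16). Your model-based route is therefore independent of the paper. As written, however, it has a genuine gap in Step~2, which is exactly where the content lies.

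\textbf{The gap in Step~2.} Step~1 is fine. Suppose we even grant that $\mathrm{Lie}\,V(\omega)$ lies in one of the two off-diagonal blocks, which one can get from the $\C^*$-weights. Deciding \emph{which} block, $\mathrm{Hom}(V(\omega_\alpha),V(\omega_\beta))$ or $\mathrm{Hom}(V(\omega_\beta),V(\omega_\alpha))$, is equivalent to the proposition itself, and you only grant it. $L$-weight considerations cannot decide it. Every irreducible representation of $B_m$, $C_m$, $F_4$, $G_2$ is self-dual, so the two blocks are isomorphic $L$-modules. Any equivariant map $V(\omega)\to\mathrm{Hom}(V(\omega_\alpha),V(\omega_\beta))$ therefore comes with one in the opposite direction.

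\textbf{Your type~$C$ check points the wrong way.} Take $X=\mathrm{Gr}_\omega(i+1,2m+1)\subset\mathbb{P}(V(\omega_i)\oplus V(\omega_{i+1}))$. The unipotent radical of the odd symplectic group acts by $x\mapsto x+c(x)e_0$ on a complement $\C^{2m}$, where $e_0$ spans the kernel $K$ of the form. On $\Lambda^{i+1}\C^{2m+1}=\Lambda^{i+1}\C^{2m}\oplus e_0\wedge\Lambda^{i}\C^{2m}$ it acts as the identity plus a map $\Lambda^{i+1}\C^{2m}\to e_0\wedge\Lambda^{i}\C^{2m}$. Hence it fixes $\mathbb{P}(V(\omega_i))$ pointwise, and the stable closed orbit is $\{W\supset K\}\cong L/P(\alpha_i)$. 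With the labelling $(C_m,\alpha_i,\alpha_{i+1})$, that orbit is $Y$. So in case (3) the statement, and your direction of maps, are correct only if the triple is read as $(C_m,\alpha_{i+1},\alpha_i)$. This is also the reading that Proposition~\ref{Kim} needs: for $m=2$, $i=1$ one has $\dim X=5$, whereas $\dim L/P(\alpha_1)+\dim V_{-\mu}=4$.

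\textbf{How to close it.} What is missing is an independent input saying how $V(\omega)$ moves a base point. Your ``cross-check'' is the right frame, but ``comparing the tangent action'' presupposes exactly that input. One way to supply it is Proposition~\ref{Kim}, which the paper also imports.
\begin{enumerate}
\item The isotropy algebra at $o\in Y$ meets $V$ in $\bigoplus_{k>-\mu}V_k$, which misses $V_{-\mu}\neq0$. So $V(\omega)$ does not fix $o$.
\item $V(\omega)=R_u(G)$ is normal and contained in every Borel subgroup of $G$. Hence it acts trivially on every closed $G$-orbit.
\item A closed $G$-orbit exists (take one of minimal dimension). It is closed, irreducible ($G$ is connected by Theorem~\ref{Pasquier}), $L$-stable, and different from $X$. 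Hence it equals $Y$ or $Z$.
\item By (1) and (2), it is not $Y$. Moreover $Y$ cannot be $G$-stable, since it would then be a single $G$-orbit that is closed.
\item Therefore the closed $G$-orbit is $Z$, so $Z$ is $G$-stable and $Y$ is not.
\end{enumerate}
This avoids both the explicit model and the direction question, and it does not need the two-orbit count of Step~1. Without such an input you would have to settle the direction case by case from the geometry of $X$, for example by checking which block map preserves the affine cone over $X$, and not from weights.
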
        

Let $\g$ be the Lie algebra of $\mathrm{Aut}(X)$. According to Theorem \ref{Pasquier}, $\g$ can be written as the semidirect sum $\g = (\mathfrak{l} \oplus \C) \rhd V$ where $\mathfrak{l}$ is a simple Lie algebra and \(V\) is an irreducible $\mathfrak{l}$-module. Let $\Delta = \{\alpha_1, ..., \alpha_m\}$ be a set of simple roots of $\mathfrak{l}$. For a simple root $\alpha_i$, let $E_{\alpha_i}$ denote the element in the Cartan subalgebra of $\mathfrak{l}$ such that
\[
\alpha_j(E_{\alpha_i}) = \begin{cases} 1 & \text{if } j = i \\ 0 & \text{if } j \neq i. \end{cases}
\]

\begin{proposition}[Proposition 25 of \cite{K}]
\label{Kim}
Let \(X\) be a smooth nonhomogeneous projective horospherical variety \((L, \alpha, \beta)\) of Picard number 1. Let $Y$ denote the closed $L$-orbit isomorphic to $L/P(\alpha)$. Let $G$ be the automorphism of $X$ and let $\g = (\mathfrak{l} \oplus \C) \rhd V$ be the corresponding Lie algebra. Let \(\mathfrak{l}_k\) and \(V_k\) be eigenspaces that have eigenvalue \(k\) under the action of \(E_\alpha\). Let \(\mu\) be the largest eigenvalue of the action of \(E_\alpha\) on \(V\). Define a gradation of $\g$ by
\begin{align*}
\mathfrak{g}_{0} & = (\mathfrak{l}_{0} \oplus \C) \rhd V_{-\mu + 1}\\
\mathfrak{g}_{-p} & = \mathfrak{l}_{-p} + V_{-\mu-p+1} \; for \; p \neq 0.
\end{align*}
Let $\mathfrak{h} = \bigoplus_{k \geq 0} \g_k$ be the nonnegative part of $\g$. Then the connected Lie subgroup of $G$ with Lie algebra $\mathfrak{h}$ is the isotropy subgroup of $G$ at some point $o \in Y$.
\end{proposition}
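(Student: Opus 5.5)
We plan to realize the statement by a dimension count together with an explicit check that $\h$ annihilates $o$. First, we check that the prescribed decomposition is a Lie algebra gradation. Since $[V,V]=0$, the central $\C$ acts on $V$ by a scalar, and $[\mathfrak{l}_i, V_j]\subset V_{i+j}$, shifting the $E_\alpha$-eigenvalues on $V$ uniformly by $-\mu+1$ is compatible with the bracket. Hence $\g=\bigoplus_p \g_p$ with $[\g_p,\g_q]\subset\g_{p+q}$, and $\h=\bigoplus_{k\ge 0}\g_k$ is a subalgebra. Explicitly,
\[
\h=\bigl(\mathfrak{p}(\alpha)\oplus\C\bigr)\rhd V_{\ge -\mu+1},
\]
where $\mathfrak{p}(\alpha)=\bigoplus_{k\ge0}\mathfrak{l}_k$ is the Lie algebra of $P(\alpha)$. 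In each case of Theorem \ref{Pasquier} we have $-w_0=\mathrm{id}$, so the weights of $V$ are symmetric and the lowest $E_\alpha$-eigenvalue on $V$ is $-\mu$. It follows that $\g/\h\cong \mathfrak{l}_{<0}\oplus V_{-\mu}$.

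Next, we work in Pasquier's model. Here $X$ is the closure of $G\cdot[v_\alpha+v_\beta]$ in $\mathbb{P}(V(\omega_\alpha)\oplus V(\omega_\beta))$, we have $Y=L\cdot[v_\alpha]$, and $V=V(\omega_\beta)^{\vee}$-type acts by the nilpotent maps sending $V(\omega_\alpha)\to V(\omega_\beta)$. We take $o=[v_\alpha]$, with $v_\alpha$ a highest weight vector. Then $\mathfrak{p}(\alpha)$ stabilizes the line $\C v_\alpha$, and the $\C$-factor acts by a scalar on each summand, so it also fixes $o$. For $u\in V_j$, the vector $u\cdot v_\alpha$ lies in $V(\omega_\beta)$ and has weight $\omega_\alpha+\mathrm{wt}(u)$. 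We will check, by weight considerations, that $\omega_\alpha+\mathrm{wt}(u)$ is a weight of $V(\omega_\beta)$ only when the $E_\alpha$-eigenvalue of $u$ is the extremal one $-\mu$. This shows $V_{\ge-\mu+1}\cdot v_\alpha=0$, and hence $\h\subset\g_o$. Conversely, $\mathfrak{l}_{<0}$ moves $o$ along $Y$ injectively (as $T_oY\cong\mathfrak{l}_{<0}$), and $V_{-\mu}$ moves $o$ out of $\mathbb{P}(V(\omega_\alpha))$ injectively, in directions transverse to $Y$.

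For the dimension count, we use the fact that $Y$ lies in the open $G$-orbit, so $\dim\g-\dim\g_o=\dim X$. It therefore suffices to prove
\[
\dim X-\dim Y=\dim V_{-\mu},
\]
which we would verify either case by case from Pasquier's list or by identifying the normal space $N_{Y/X,o}$ with the irreducible $P(\alpha)$-module $V_{-\mu}$ through the injectivity above. Combined with $\h\subset\g_o$, this gives $\h=\g_o$. Finally, for the group statement, the isotropy group $G_o$ is $(P(\alpha)\times\C^*)\ltimes U$ modulo a finite central subgroup, where $U=\exp V_{\ge-\mu+1}$ is a connected vector group. Since $P(\alpha)$ is connected, $G_o$ is connected, and it therefore equals the connected subgroup with Lie algebra $\h$.

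We expect the main obstacle to be the second step, namely proving that exactly $V_{\ge -\mu+1}$ kills $v_\alpha$ while $V_{-\mu}$ acts injectively. This amounts to a representation-theoretic check on the map $V\otimes V(\omega_\alpha)\to V(\omega_\beta)$, which we would first attempt uniformly via the highest-weight argument above and, failing that, verify separately in each of the five cases.
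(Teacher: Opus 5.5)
The paper gives no proof of this statement; it is quoted from Kim's Proposition 25. Your proposal is therefore an independent, self-contained verification, and its outline is correct.

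The key fact behind your approach is the decomposition
$\g_o=(\mathfrak{p}(\alpha)\oplus\C)\oplus\mathrm{Ann}_V(v_\alpha)$. It holds because $\g_o$ is stable under the $\C^*$-factor, which is trivial on $\mathfrak{l}\oplus\C$ and acts by a nontrivial character on $V$. Each $v\in V$ acts by a square-zero map, so the same description holds at the group level, and your connectedness argument goes through.

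Your weight claim does hold in all five cases, but the step you flag as the main obstacle can be avoided.
\begin{itemize}
\item $\mathrm{Ann}_V(v_\alpha)$ is $\mathfrak{p}(\alpha)$-stable, and it is $\mathrm{ad}(E_\alpha)$-graded because it is $T$-stable.
\item $V_{-\mu}$ is an irreducible $\mathfrak{l}_0$-module containing the lowest weight vector $v_{\mathrm{low}}$, and $V=U(\mathfrak{n}^+)v_{\mathrm{low}}$ with $\mathfrak{n}^+\subset\mathfrak{p}(\alpha)$.
\item Since $Y$ is not $G$-stable, $\mathrm{Ann}_V(v_\alpha)\neq V$. Hence $\mathrm{Ann}_V(v_\alpha)\cap V_{-\mu}=0$, so $\g_o\subset\h$ holds uniformly.
\end{itemize}
This also supplies the injectivity of $V_{-\mu}$, which you asserted without proof. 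You then need only one case-dependent input. Either use your weight claim, which gives $\mathrm{Ann}_V(v_\alpha)=V_{\geq-\mu+1}$ directly with no dimension count. Or use the identity $\dim X-\dim Y=\dim V_{-\mu}$, with no weight computation. Compared with the bare citation, your route makes explicit which closed orbit and which grading element the statement really depends on.

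That dependence matters when you do the case check. In case (3), $X=\mathrm{IG}(i+1,2m+1)$ and $G$ preserves the line $\ker\omega$. So the $G$-stable closed orbit is $\{W\ni\ker\omega\}\cong\mathrm{Sp}_{2m}/P(\alpha_i)$. The closed $L$-orbit in the open $G$-orbit is $\mathrm{IG}(i+1,2m)\cong\mathrm{Sp}_{2m}/P(\alpha_{i+1})$.

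Your argument correctly uses the orbit in the open $G$-orbit, equivalently the summand $V(\omega_\alpha)$ on which $V$ acts nontrivially. So in this case you must take $\alpha=\alpha_{i+1}$, not the label $\alpha_i$ from the list. With $\alpha=\alpha_i$ the count already fails for $\mathrm{IG}(2,5)$, where $\dim\mathfrak{l}_{<0}+\dim V_{-\mu}=3+1\neq 5=\dim X$.

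Also, $V$ is not $V(\omega_\beta)^\vee$ in general: for $F_4$, $V=V(\omega_4)$ while $V(\omega_\beta)=V(\omega_3)$. All you actually use is an $L$-equivariant embedding $V\subset\mathrm{Hom}(V(\omega_\alpha),V(\omega_\beta))$, and its direction is forced by the $G$-stability of $Z$.
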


\section{Existence}
\label{sec:Existence}

Throughout this paper, let $X$ be a smooth projective nonhomogeneous horospherical variety of Picard number 1 and let \(G=\text{Aut}(X)\) denote its automorphism group. In this section, we define a unipotent subgroup $N$ of $G$ and show that $X$ admits an EC-structure of $N$.

According to Proposition \ref{stable}, the action of $G$ on $X$ has exactly two orbits: one open and one closed. The closed $L$-orbit $Z$ is invariant under the action of $G$, while the other closed $L$-orbit $Y$ is contained in the open $G$-orbit. 

Fix a point \(o \in Y\) and let $H$ be the isotropy subgroup of $G$ at $o$. Proposition \ref{Kim} gives a gradation $\g = \bigoplus_{k \in \Z} \g_k$ such that the Lie algebra of $H$ is 
\[
\mathfrak{h} := \bigoplus_{k \geq 0} \g_k = (\bigoplus_{k\geq0} \mathfrak{l}_k \oplus \C) \rhd \bigoplus_{k = -\mu+1}^\mu V_k.
\]
Let $\n$ be the negative part $\bigoplus_{k < 0} \mathfrak{g}_k$ of $\g$. Let $N$ be the algebraic subgroup of $G$ whose Lie algebra is $\n = \bigoplus_{k < 0} \mathfrak{g}_k$. Then $N$ is a unipotent subgroup of $G$. Indeed, its Lie algebra $\n$ can be written as
\[\n = (\bigoplus_{k < 0} \mathfrak{l}_k) \oplus V_{-\mu},\]
where the two summands commute, i.e., 
\[[\bigoplus_{k < 0} \mathfrak{l}_k, V_{-\mu}] = 0.\]
Both $\bigoplus_{k < 0} \mathfrak{l}_k$ and $V_{-\mu}$ are nilpotent Lie algebras. Hence, $\n$, and therefore $N$, is unipotent.

Now, we show that $N$ is the unipotent group introduced in the Introduction. Before proceeding, we fix some notation. Let $w_0$ denote the longest element in the Weyl group of $L$. Let $H^- := w_0Hw_0^{-1}$ and let $H_u$ and $H_u^-$ be the unipotent radicals of $H$ and $H^-$, respectively. Let $\h^-$, $\h_u$, and $\h_u^-$ be the Lie algebras of $H^-$, $H_u$, and $H_u^-$, respectively.

\begin{lemma}
\label{unipotent}
In the above notation, $N$ is a subgroup of $H^-$ satisfying
\[N \cong H^- /(H \cap H^-)\]
as an algebraic variety. 
\end{lemma}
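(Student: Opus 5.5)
Everything can be done at the level of Lie algebras, using the grading from Proposition \ref{Kim}. Choose a maximal torus $T$ of $L$ whose Lie algebra contains $E_\alpha$, together with the Borel subgroup determining the positive roots. Let $\dot w_0$ be a representative of $w_0$ in $N_L(T)\subset L\subset G$. Then $\mathrm{Ad}(\dot w_0)$ sends the root space of $\gamma$ to that of $w_0\gamma$, and $w_0$ maps the positive roots to the negative roots. In every case of Theorem \ref{Pasquier} the group $L$ is of type $B$, $C$, $F_4$ or $G_2$, so $w_0=-1$. Hence $\mathrm{Ad}(\dot w_0)$ carries $\mathfrak{l}_k$ onto $\mathfrak{l}_{-k}$. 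It also carries the $T$-weight space $V_\lambda$ onto $V_{-\lambda}$, and therefore carries the $E_\alpha$-eigenspace $V_k$ onto $V_{-k}$. It acts trivially on the central factor $\C$. The conclusion is
\[
\h^- = \mathrm{Ad}(\dot w_0)\h = \Big(\bigoplus_{k\le 0}\mathfrak{l}_k\oplus\C\Big)\rhd\bigoplus_{k=-\mu}^{\mu-1}V_k .
\]
Comparing this with the description of $\h$ gives
\[
\h\cap\h^- = (\mathfrak{l}_0\oplus\C)\rhd\bigoplus_{k=-\mu+1}^{\mu-1}V_k .
\]

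Next I compare $\n=(\bigoplus_{k<0}\mathfrak{l}_k)\oplus V_{-\mu}$ with these two subalgebras. Two facts are immediate. First, $\n\subset\h^-$. Second, $\n\oplus(\h\cap\h^-)=\h^-$ as vector spaces, which holds because $\n\cap\h=0$ and the dimensions add up. The group $N$ is connected and unipotent, and its Lie algebra lies in that of $H^-$. Provided $H^-$ is the algebraic subgroup with this Lie algebra (it is a conjugate of $H$, and the Lie algebra of $H$ is $\h$ by Proposition \ref{Kim}), we get $N\subset H^-$.

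Consider the orbit map $\phi\colon N\to H^-/(H\cap H^-)$, $n\mapsto n(H\cap H^-)$. Its differential at the identity is the projection $\n\to\h^-/(\h\cap\h^-)$, which is an isomorphism by the direct sum decomposition. So $\phi$ is étale at $e$, and by $N$-equivariance it is étale everywhere. It is injective because $N\cap H\subset N\cap H^-\cap H$ is a unipotent group whose Lie algebra is $\n\cap\h=0$, hence trivial. Since we are in characteristic $0$, this makes $\phi$ an open immersion.

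The main obstacle is surjectivity: we need the $N$-orbit of the base point to be all of $H^-/(H\cap H^-)$, not just an open subset. My plan is to use a Levi-type decomposition of $H^-$. Let $R$ be the connected subgroup with Lie algebra $\mathfrak{l}_0\oplus\C$; it is reductive, being a Levi factor of the parabolic of $L$ times the torus. Let $U$ be the subgroup with Lie algebra $\bigoplus_{k<0}\mathfrak{l}_k\rhd\bigoplus_{k=-\mu}^{\mu-1}V_k$, which is the unipotent radical $H^-_u$. Then $H^-=R\ltimes U$ and $H\cap H^-\supset R$, so
\[
H^-/(H\cap H^-)\cong U/(U\cap H)=H_u^-/(H_u\cap H_u^-).
\]
Inside the unipotent group $U$, the subgroup $N$ and the subgroup $U\cap H$ have complementary Lie algebras. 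The multiplication map $N\times(U\cap H)\to U$ is therefore an isomorphism of varieties. This uses the standard fact that for unipotent groups with complementary subalgebras, the product map is bijective; one checks this via $\exp$ and a graded Baker--Campbell--Hausdorff induction on the $E_\alpha$-grading. Hence $N$ acts simply transitively on $U/(U\cap H)$, which gives the required isomorphism and also the statement $N\cong H_u^-/(H_u\cap H_u^-)$.

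If the BCH step resists, an alternative is to note that $U/(U\cap H)$ is an affine space, being a quotient of unipotent groups. Then the open immersion $N\cong\mathbb{A}^n\hookrightarrow\mathbb{A}^n$ has a closed orbit image, because orbits of unipotent groups on affine varieties are closed (Kostant--Rosenlicht). That forces surjectivity.
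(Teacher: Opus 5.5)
Your proposal is correct and follows essentially the same route as the paper. Both compute $\h^-$ and $\h\cap\h^-$ from $w_0=-1$, both use the Levi factor with Lie algebra $\mathfrak{l}_0\oplus\C$ (which lies in $H\cap H^-$) to identify $H^-/(H\cap H^-)$ with $H_u^-/(H_u\cap H_u^-)$, and both conclude from the complementary decomposition $\h_u^-=\n\oplus(\h_u\cap\h_u^-)$, whose passage to groups the paper asserts only with ``which implies'' and your Kostant--Rosenlicht argument justifies (it is also immediate here because $V$ is abelian, so $H_u^-$ is $\exp\bigl(\bigoplus_{k<0}\mathfrak{l}_k\bigr)$ acting on the vector group $\bigoplus_{k=-\mu}^{\mu-1}V_k$).
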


\begin{proof}
We first show
\[H_u^- = (H_u \cap H_u^-) \cdot N \text{, and } N \cong H_u^- /(H_u \cap H_u^-)\]
and then show that
\[H_u^- /(H_u \cap H_u^-) \cong H^- /(H \cap H^-)\]
as an algebraic variety.

Since the type of $L$ is one of $B_n$, $C_n$, $F_4$, and $G_2$, $w_0(\omega) = -\omega$ for any weight $\omega$ of $L$. Hence, we have
\begin{align*}
\h^- &= (\bigoplus_{k\leq0} \mathfrak{l}_k \oplus \C) \rhd \bigoplus_{k = -\mu}^{\mu-1}V_k, \\
\h \cap \h^-  &= (\mathfrak{l}_0 \oplus \C) \rhd \bigoplus_{k = -\mu+1}^{\mu-1}V_k,\\
\h_u &= \bigoplus_{k > 0} \mathfrak{l}_k \rhd \bigoplus_{k = -\mu + 1}^{\mu}V_k,\\
\h_u^- &= \bigoplus_{k < 0} \mathfrak{l}_k \rhd \bigoplus_{k = -\mu}^{\mu-1}V_k.
\end{align*}

The Lie algebra of $H_u \cap H_u^-$ is equal to 
\[\h_u \cap \h_u^- = \bigoplus_{k=-\mu+1}^{\mu-1} V_{k}.\]
On the other hand, $N$ is a nilpotent group with Lie algebra
\[\n = (\bigoplus_{k < 0} \mathfrak{l}_k) \oplus V_{-\mu}.\]
Hence, we have 
\[\h_u^- = \n + (\h_u \cap \h_u^-) \text{ and } \n = \h_u^- / (\h_u \cap \h_u^-)\]
as vector spaces, which implies that
\[H_u^- = (H_u \cap H_u^-) \cdot N \text{ and } N\cong H_u^-/(H_u \cap H_u^-)\]
as algebraic varieties.

The explicit descriptions of the Lie algebras above show that
\[H_u \cap H_u^- = (H\cap H^-) \cap H_u^-\]
and 
\[(H \cap H^-) \cdot H_u^- = H^-.\]
Hence, $H_u^-$ acts on $H^-/(H \cap H^-)$ transitively and the isotropy group at the identity coset is $H_u \cap H_u^-$. Consequently, 
\[H_u^- /(H_u \cap H_u^-) \cong H^- /(H \cap H^-)\]
as algebraic varieties.
\end{proof}

We now show the existence of an EC-structure of $N$ on $X$.

\begin{proposition}
\label{existence}
With the notation above, $X$ admits an EC-structure of $N$.
\end{proposition}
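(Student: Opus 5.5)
We restrict the $G$-action on $X$ to $N$ and show that the orbit map $N \to X$, $n \mapsto n\cdot o$, is an isomorphism of $N$ onto a Zariski-open subset of $X$. Projectivity of $X$ is given, so this is all that is required for an EC-structure.

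\textbf{Step 1: the orbit $N\cdot o$ has dimension $\dim X$.} The $G$-orbit of $o$ is open in $X$, since $o \in Y$ lies in the open $G$-orbit by Proposition \ref{stable}. Hence $T_oX \cong \g/\h$. By Proposition \ref{Kim}, $\h = \bigoplus_{k\ge 0}\g_k$, and $\n = \bigoplus_{k<0}\g_k$ is a vector-space complement of $\h$ in $\g$. The differential at the identity of the orbit map $N \to X$ is the composition $\n \hookrightarrow \g \to \g/\h$, which is an isomorphism. Therefore $\dim N\cdot o = \dim \n = \dim G/H = \dim X$. Because $X$ is irreducible, $N\cdot o$ is open, in particular Zariski-open: orbits of algebraic group actions are locally closed, and a locally closed subset of full dimension is open.

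\textbf{Step 2: the isotropy group $N \cap H$ is trivial.} At the Lie algebra level, $\n \cap \h = 0$ because the gradings are disjoint, so $N\cap H$ is finite. Since $N$ is unipotent, $N\cap H$ is a finite subgroup of a unipotent group. Every element of a unipotent group over $\C$ of finite order is trivial, because $\exp$ is bijective and $\exp(tA)$ has finite order only when $A=0$. Hence $N \cap H = \{e\}$.

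\textbf{Step 3: the orbit map is an isomorphism.} The map $N \to N\cdot o$ is bijective and $N$-equivariant. It is also separable in characteristic $0$, and the orbit $N\cdot o$ is smooth, being a homogeneous space. Therefore $N \cong N/(N\cap H) \cong N\cdot o$ equivariantly, with $N$ acting on itself by left multiplication. This gives the EC-structure.

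The only genuinely delicate point is whether $N$ as defined, the algebraic subgroup with Lie algebra $\n$, is connected, so that $N$ equals the image $\exp(\n)$. It is, because it is taken to be the connected unipotent group integrating $\n$. Beyond that, the argument reduces to the fact $\g = \h \oplus \n$, which is read off directly from the gradation in Proposition \ref{Kim}.
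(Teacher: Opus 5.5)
Your proposal is correct and follows essentially the same route as the paper: $\n\cap\h=0$ together with unipotence of $N$ forces the isotropy group $N\cap H$ to be trivial, and a dimension count then makes $N\cdot o$ a Zariski-open orbit equivariantly isomorphic to $N$. The differences are cosmetic: you get the dimension from the differential $\n\to\g/\h$, and you kill the finite stabilizer using the absence of torsion in unipotent groups rather than their connectedness. Your Step 3 also makes explicit the separability point that the paper leaves implicit.
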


\begin{proof}
Let $H$ be the isotropy subgroup of $G$ at $o$. Then $N \cap H$ is the isotropy subgroup of $N$ at $o$. According to Proposition \ref{Kim}, the Lie algebra of $H$ is  \[\mathfrak{h} = \oplus_{k \geq 0} \mathfrak{g}_k.\]
Thus, the Lie algebra of $N \cap H$ is equal to 
\[\n \cap \mathfrak{h} = \n \cap (\oplus_{k \geq 0} \mathfrak{g}_k) = 0.\]
Since $N \cap H$ is a unipotent subgroup of $G$, $N \cap H$ is irreducible. Hence, $N \cap H$ is trivial and the orbit $N\cdot o$ is isomorphic to $N$. It follows that 
\[\mathrm{dim}(N\cdot O) =\mathrm{dim}\, N = \mathrm{dim}\, \n = \mathrm{dim}\, X.\]
Therefore, the orbit \(N\cdot o\) is a Zariski-open subset of \(X\), which is equivariantly isomorphic to \(N\).
\end{proof}

\section{Differential systems}
In this section, we discuss the general theory of differential systems and their infinitesimal automorphisms. For more details, see \cite{T} or \cite{Y}. 

\subsection{Definitions}

\begin{definition}
A {\it differential system} \(D\) on a complex manifold \(M\) is a subbundle of the tangent bundle \(TM\), and is denoted by \((M, D)\).

Let \(\mathcal{D}\) be the sheaf of sections of a differential system \(D\). Put 
\[\mathcal{D}^{-1} = \mathcal{D}, \; \mathcal{D}^{k} = \mathcal{D}^{k+1} + [\mathcal{D}, \mathcal{D}^{k+1}] \text{ for }  k < -1.\]
\(D\) is called {\it regular} if \(\mathcal{D}^k\) is locally free.
\end{definition}

For a regular differential system $(M, D)$, let $D^k$ denote the vector bundle corresponding to \(\mathcal{D}^k\).

\begin{proposition}[Proposition 1.1 of \cite{T}]
\label{}
Let $(M, D)$ be a regular differential system. Then\\
(1) There exists a unique integer $\mu > 0$ such that 
\[
\cdots =D^{-\mu -1} = D^{-\mu} \supsetneq D^{-\mu + 1} \supsetneq \cdots \supsetneq D^{-1} = D
\]
(2) $[\mathcal{D}^k, \mathcal{D}^l] \subset \mathcal{D}^{k+l}$ for all $k,l < 0$.
\end{proposition}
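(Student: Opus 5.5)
This is Tanaka's Proposition 1.1, and I would prove it directly from the recursive definition of the derived sheaves $\mathcal{D}^k$. Three ingredients are needed: monotonicity of the filtration, stabilization of ranks, and a bracket inclusion proved by induction.

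\emph{Part (1).} By construction $\mathcal{D}^{k} \supset \mathcal{D}^{k+1}$, so the bundles $D^k$ form an increasing chain of subbundles of $TM$ as $k$ decreases. Regularity gives locally constant ranks; on a connected component they are constant. These ranks are bounded by $\dim M$, so they stabilize: there is a first index $-\mu$ with $D^{-\mu-1}=D^{-\mu}$. Set $\mu$ to be the least positive integer with $\mathcal{D}^{-\mu-1}=\mathcal{D}^{-\mu}$ (possibly $\mu=1$, when $D$ is involutive).

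To see that the chain is constant from then on, use the recursion $\mathcal{D}^{k-1}=\mathcal{D}^{k}+[\mathcal{D},\mathcal{D}^{k}]$. The sheaf $\mathcal{D}^{k-1}$ depends only on $\mathcal{D}^k$. Hence $\mathcal{D}^{-\mu-1}=\mathcal{D}^{-\mu}$ implies $\mathcal{D}^{-\mu-2}=\mathcal{D}^{-\mu-1}$, and so on by induction. Minimality of $\mu$ gives the strict inclusions $D^{k}\supsetneq D^{k+1}$ for $-\mu\le k<-1$. Since we have equality of subsheaves of locally free sheaves of the same rank, equality of sheaves is the same as equality of bundles; strictness is also forced at the bundle level because the ranks strictly increase. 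Uniqueness of $\mu$ is immediate from this description.

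\emph{Part (2).} I would prove $[\mathcal{D}^k,\mathcal{D}^l]\subset\mathcal{D}^{k+l}$ by induction on $-k$, for all $l<0$ simultaneously. The base case $k=-1$ is $[\mathcal{D},\mathcal{D}^l]\subset\mathcal{D}^{l-1}$, which holds by definition.

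For the inductive step, assume the claim for $k$ and all $l$. The sheaf $\mathcal{D}^{k-1}$ is generated by $\mathcal{D}^k$ together with brackets $[X,Y]$ with $X\in\mathcal{D}$ and $Y\in\mathcal{D}^k$. The brackets are taken locally, with sums of such terms, and one uses $\mathcal{O}$-linearity up to terms of the form $f[X,Y]+X(f)Y$.
\begin{itemize}
\item The $\mathcal{D}^k$ part lands in $\mathcal{D}^{k+l}\subset\mathcal{D}^{k+l-1}$.
\item For $Z\in\mathcal{D}^l$, the Jacobi identity gives $[[X,Y],Z]=[X,[Y,Z]]-[Y,[X,Z]]$. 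The first term lies in $[\mathcal{D},\mathcal{D}^{k+l}]\subset\mathcal{D}^{k+l-1}$. The second lies in $[\mathcal{D}^k,\mathcal{D}^{l-1}]\subset\mathcal{D}^{k+l-1}$ by the inductive hypothesis applied with $l-1$.
\item Function multiples $g[X,Y]$ and the correction terms $Z(g)[X,Y]$ lie in $\mathcal{D}^{k-1}\subset\mathcal{D}^{k+l-1}$, since $l\le -1$.
\end{itemize}

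The main obstacle is this bookkeeping in part (2): keeping track of the non-tensorial terms created by multiplying sections by functions, and checking that the induction runs over all $l$ at once. This works because the inductive hypothesis is used at the same $k$ but with the shifted index $l-1$. Part (1) needs only the ranks and the fact that each step of the recursion depends only on the previous sheaf.
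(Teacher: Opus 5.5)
The paper gives no proof of its own here and simply cites Tanaka; your argument is the standard one and it is correct. For (1) you use rank stabilization together with the fact that $\mathcal{D}^{k-1}$ depends only on $\mathcal{D}^k$, and for (2) you induct on $-k$ over all $l$ at once, using the Jacobi identity $[[X,Y],Z]=[X,[Y,Z]]-[Y,[X,Z]]$ and the hypothesis at $(k,l)$ and $(k,l-1)$.

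Two points deserve to be stated explicitly in (1). First, you are using regularity in Tanaka's sense, namely that each $D^k$ is a subbundle of $TM$. Nested locally free subsheaves of equal rank need not coincide (for example $\mathcal{O}\,z\partial_z\subset\mathcal{O}\,\partial_z$), so ``locally free'' alone would not let you pass from equal ranks to $\mathcal{D}^{-\mu-1}=\mathcal{D}^{-\mu}$. Second, the uniqueness of $\mu$ presumes that $M$ is connected.
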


\begin{definition}
Let $(M, D)$ be a regular differential system with $D^{-\mu} = TM$. For each \(x \in M\), define \[\mathfrak{g}_{-1}(x) = D^{-1}(x), \; \mathfrak{g}_{k}(x) = D^k(x)/D^{k+1}(x) \; (-\mu \leq k < -1).\] 
Set 
\[\m(x) = \bigoplus_{k = -1}^{-\mu} \g_k (x).\]
The Lie bracket of local vector fields on $M$ induces a Lie bracket operation on $\m(x)$ compatible with the gradation. The resulting graded Lie algebra $\m(x)$ is called the {\it symbol algebra of $D$ at $x \in M$}.
\end{definition}

Note that the symbol algebra $\m(x) = \bigoplus_{k < 0} \g_k(x)$ of any regular differential system is nilpotent and satisfies $[\g_{k+1}(x), \g_{-1}(x)] = \g_k(x)$ for all $k < -1$. A graded Lie algebra with this property is called fundamental.

\begin{definition}
A graded Lie algebra 
\[\m = \bigoplus_{k < 0} \m_k\]
is called {\it fundamental} if $\m$ is nilpotent and \[[\m_{k+1}, \m_{-1}] = \m_k \text{ for all } k < -1.\] For a fundamental graded Lie algebra $\m$, $(M, D)$ is called {\it of type $\m$} if the symbol algebra $\m(x)$ of $D$ is isomorphic to $\m$ for all $x \in M$.
\end{definition}

\subsection{Prolongation}

Let $\m = \oplus_{k < 0} \g_k$ be a finite dimensional fundamental graded Lie algebra.

\begin{definition}
For $k < 0$, put $\g_k(\m) = \g_k$. For $k \geq 0$, define $\g_k(\m)$ by
\[
\g_k(\m) = \{u \in \bigoplus_{p < 0} (\g_{p}^*(\m) \otimes \g_{p+k}(\m)) | u([A, B]) = u(A)(B) - u(B)(A) \}
\]
Define a Lie bracket on $\g(\m) := \bigoplus_{k \in \Z} \g_k(\m)$ as follows. For $X, Y \in \m$, $[X, Y]$ is the same as the Lie bracket operation of $\m$. If $X \in \m$ and $u \in \g_k(\m)$ for some $ k \geq 0$, then $[u, X] = -[X, u] = u(X)$. If $u \in \g_k(\m)$ and $v \in \g_l(\m)$ where $k \geq l \geq 0$, then $[u, v]$ is the element of $ \oplus_{p < 0} \g_{p}^*(\m) \otimes \g_{p+k+l}(\m)$ such that $[u, v](X) = [u(X), v] + [u, v(X)]$.

A graded Lie algebra $\g(\m) = \bigoplus_{k \in \Z} \g_k(\m)$ is called the {\it prolongation of $\m$}.
\end{definition}

Let $\g_0$ be a subalgebra of $\g_0(\m)$. There is a notion of the prolongation of $(\m, \g_0)$.

\begin{definition}
For $k < 0$, put $\g_k(\m, \g_0) = \g_k$. For $k \geq 0$, define $\g_k(\m, \g_0)$ by
\[
\g_k(\m, \g_0) = \{u \in \g_k(\m) | [u, \g_{-1}] \subset \g_{k-1}(\m, \g_0)\}
\]
$\g(\m, \g_0) := \oplus_{k \in \Z} \g_k(\m, \g_0)$ is a graded Lie subalgebra of $\g(\m)$. $\g(\m, \g_0)$ is called the {\it prolongation of $(\m, \g_0)$}.
\end{definition}

The dimension of the prolongation of $(\m, \g_0)$ could be infinite.

\subsection{Infinitesimal automorphisms}
\label{sec:Infitiesimal automorphisms}

Let $\m = \bigoplus_{k< 0} \g_k$ be a fundamental graded Lie algebra and let $M(\m)$ be its adjoint group. Let $\xi$ be the Maurer-Cartan form on $M(\m)$; that is, $\xi$ is the $\m$-valued function such that 
\[\xi(A_h) = A\]
for any left invariant vector field $A$ and any $h \in M(\m)$, where $A_h$ denotes the value of $A$ at $h$. Since $\m = \bigoplus_{k < 0} \g_k$, $\xi$ decomposes as $\xi = \sum_{k < 0} \xi^k$ where $\xi^k : M(\m) \to \g_k$ denotes the $\g_k$-component of $\xi$.

\begin{lemma}[Lemma 6.2 and Lemma 6.3 of \cite{T}]
\label{Tanaka}
Let $D_\m$ denote the left-invariant differential system on $M(\m)$ defined by $D_\m = M(\m) \times \g_{-1}$. For an infinitesimal automorphism $A$ of $(M(\m), D_\m)$ defined on $U$, there exists a unique family of functions $\{f_A^k\}_{k \in \Z}$ satisfying the following conditions:
\begin{enumerate}[label = (\arabic*)]
    \item $f_A^k$ is a $\g_k(\m)$-valued function defined on $U$
    \item $f_A^k = \xi^k(A)$ for $k < 0$
    \item $df_A^k = \sum_{r < 0} [f_A^{k-r}, \xi^r]$.
\end{enumerate}
Here, $\xi$ denotes the Maurer-Cartan form on $M(\m)$.

Conversely, let $a = \sum_{k \leq l} a^k \in \bigoplus_{k \leq l}\g_k(\m)$ and $h \in M(\m)$. Then there exists a unique infinitesimal automorphism $A$ of $(M(\m), D_\m)$ defined on $M(\m)$ such that the associated family $\{f_A^k\}_{k \in \Z}$ satisfies 
\[f_A^k(h) = a^k \; (k \leq l), \quad f_A^k \equiv 0 \; (k > l).\]
\end{lemma}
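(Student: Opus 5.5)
The plan is to reduce everything to the Maurer--Cartan equation $d\xi+\tfrac12[\xi,\xi]=0$ on $M(\m)$, and to read condition (3) as the equation of a flat connection. Concretely, I evaluate (3) on the left-invariant vector field $X$ generated by $X\in\g_p$. Since $\xi^r(X)=0$ for $r\neq p$ and $\xi^p(X)=X$, condition (3) becomes
\[
X f_A^k=[f_A^{k-p},X]=f_A^{k-p}(X)\qquad(X\in\g_p,\ p<0).
\]
For $p=-1$ this reads $f_A^{k+1}(X)=X f_A^k$ for all $X\in\g_{-1}$.

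\textbf{Uniqueness.} An element of $\g_{k+1}(\m)$, $k+1\ge0$, is a derivation-type map determined by its values on $\g_{-1}$, because $\m$ is fundamental and $\g_{-1}$ generates $\m$. Hence $f_A^{k+1}$ is forced by the derivatives of $f_A^k$. Starting from (2), $f_A^{k}=\xi^k(A)$ for $k<0$, induction on $k$ shows the whole family is unique.

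\textbf{Existence (Lemma 6.2).} I define $f_A^{k+1}$ inductively by the formula above, first as a linear map on $\g_{-1}$, and then extend it to $\m$. The extension rule is forced by compatibility with brackets: $f(X)$ for $X\in\g_p$ is prescribed by $Xf^{k+1+p}$.

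The base step is $k=0$. Here I use $[A,\Gamma(\mathcal D^k)]\subset\Gamma(\mathcal D^k)$ for all $k$, which follows from $[A,\mathcal D]\subset\mathcal D$ by induction. Writing $L_A\xi=d(\xi(A))+\iota_A d\xi=d(\xi(A))-[\xi(A),\xi]$, this shows that $f_A^0$ preserves the filtration. Its induced graded map is a derivation of $\m$, i.e. it lies in $\g_0(\m)$.

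For the induction step, I must check two things:
\begin{enumerate}
\item the map just defined satisfies $u([X,Y])=u(X)(Y)-u(Y)(X)$, so it lies in $\g_{k+1}(\m)$;
\item condition (3) holds for all $r<0$, not only $r=-1$.
\end{enumerate}
Both follow from the identity $[X,Y]f=XYf-YXf$ for left-invariant fields, combined with the inductive hypothesis and the Jacobi-type identity defining brackets in $\g(\m)$. I expect this bookkeeping to be the main obstacle. The difficulty is that the sum in (3) involves all $f^{k-r}$ at once, so the induction has to be organised on the degree $k$ with all lower degrees already known.

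\textbf{Converse (Lemma 6.3).} I regard $W=\bigoplus_{k\le l}\g_k(\m)$ as an $\m$-submodule of $\g(\m)$ under the bracket. This works because $[\m,\g_k(\m)]\subset\bigoplus_{j<k}\g_j(\m)$. On the trivial bundle $M(\m)\times W$ I consider the connection
\[
\nabla F=dF-[F,\xi].
\]
Its flat sections are exactly the families satisfying (3) with $f^k\equiv0$ for $k>l$. The curvature vanishes by the Maurer--Cartan equation together with the Jacobi identity in $\g(\m)$. Since $M(\m)$ is a simply connected nilpotent group, there is a unique global flat section $F$ with $F(h)=a$.

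Finally, I let $A$ be the vector field with $\xi(A)=F^{<0}$. The $k<0$ components of (3) say that $L_A$ preserves each $\xi$-filtration piece, hence $[A,\mathcal D]\subset\mathcal D$. By the uniqueness part, the family attached to $A$ is $F$.
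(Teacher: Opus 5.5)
The paper does not prove this lemma itself; it is imported from Tanaka's Lemmas 6.2 and 6.3. Your proposal is a correct version of the standard argument, and the existence bookkeeping is lighter than you fear: with $f_A^{m}(X):=Xf_A^{m+p}$ for $X\in\g_p$, condition (3) holds by definition except for $X\xi^{j}(A)=[\xi^{j-p}(A),X]$ with $j<p$, which is exactly $[A,\mathcal{D}^p]\subset\mathcal{D}^p$, and the derivation property reduces to the one-line identity $f_A^m([X,Y])=XYf_A^{m+p+q}-YXf_A^{m+p+q}=[[f_A^m,X],Y]-[[f_A^m,Y],X]$; the converse is the flatness of $dF-[F,\xi]$ on $\bigoplus_{k\le l}\g_k(\m)$, as you describe. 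The global converse needs $M(\m)$ to be the simply connected group with Lie algebra $\m$ (the paper's ``adjoint group'' must be read this way), and you correctly assume this to get the global flat section.
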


Let $h \in M(\m)$ and let $U \subset M(\m)$ be a connected open subset containing $h$. Let $\mathcal{A}(U)$ be the Lie algebra of infinitesimal automorphisms of $D_\m$ defined on $U$. By Lemma \ref{Tanaka}, there exists an injective map 
\[\rho_h: \g(\m)  \xhookrightarrow{} \mathcal{A}(U).\]
However, $\rho_h$ need not be surjective. Indeed, for $A \in \mathcal{A}(U)$, the associated family $\{f^k_A\}_{k \in \Z}$ need not satisfy $f^k_A \equiv 0$ for sufficiently large $k$. On the other hand, if the dimension of $\g(\m)$ is finite, then $\rho_h$ is bijective (\cite{Y}). The following lemma shows that $\rho_h$ is a Lie algebra antihomomorphism, i.e. $\rho_h([a, b]) = -[\rho_h(a), \rho_h(b)]$.

\begin{lemma}[Lemma 6.4 of \cite{T}]
\label{Lie bracket}
Let $A, B \in \mathcal{A}(U)$. Then 
\[f_{[A, B]}^p = - \sum_{r+s = p} [f_A^r, f_B^s] \quad (p \in \Z).\]
\end{lemma}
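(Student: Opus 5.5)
We prove the formula by comparing the defining properties of the family attached to $[A,B]$ with those of the family $g^p := -\sum_{r+s=p}[f_A^r, f_B^s]$. By the uniqueness part of Lemma \ref{Tanaka}, it suffices to show two things: that $g^p = \xi^p([A,B])$ for $p<0$, and that $\{g^p\}$ satisfies the structure equation $dg^p = \sum_{r<0}[g^{p-r}, \xi^r]$. Each $g^p$ is automatically $\g_p(\m)$-valued, since the bracket of $\g(\m)$ respects the grading. Also, each sum $\sum_{r+s=p}$ is finite at every order we need, because of the way the family is built from negative degrees.

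For the structure equation, we differentiate $g^p$ using the Leibniz rule and substitute condition (3) for $df_A^r$ and $df_B^s$. This gives
\[
dg^p = -\sum_{r+s=p}\sum_{t<0}\bigl([[f_A^{r-t},\xi^t], f_B^s] + [f_A^r,[f_B^{s-t},\xi^t]]\bigr).
\]
Applying the Jacobi identity in $\g(\m)$ and reindexing, this becomes $\sum_{t<0}[g^{p-t},\xi^t]$. One subtlety is that $f_A^r$ with $r\ge 0$ lies in the prolongation, not in $\m$. So the Jacobi identity has to be justified in $\g(\m)$ itself, which we take as part of the statement that $\g(\m)$ is a graded Lie algebra.

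For the negative degrees, we use the Maurer--Cartan form. For vector fields $A,B$,
\[
\xi([A,B]) = A(\xi(B)) - B(\xi(A)) + [\xi(A),\xi(B)],
\]
up to the sign convention for $d\xi + \tfrac12[\xi,\xi]=0$. Now $A(\xi^p(B)) = df_B^p(A) = \sum_{r<0}[f_B^{p-r}, \xi^r(A)] = \sum_{r<0}[f_B^{p-r}, f_A^r]$, and symmetrically for $B(\xi^p(A))$. Taking the $\g_p$-component and combining terms, the sum splits into three parts: terms with $r<0,s\ge0$; terms with $r\ge0,s<0$; and terms with both indices negative, which come from the bracket $[\xi(A),\xi(B)]$. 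Together these give $-\sum_{r+s=p}[f_A^r,f_B^s]$, provided no term with both $r,s\ge0$ appears, which holds automatically for $p<0$.

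The main obstacle is sign bookkeeping. Vector-field brackets correspond to the negative of Lie algebra brackets for left-invariant fields, and this is exactly the source of the antihomomorphism sign. We would fix the convention first by checking the identity on two left-invariant vector fields, where $f^k$ vanishes for $k\ge0$ and the formula reduces to $\xi([A,B]) = -[\xi(A),\xi(B)]$ for right-translation-generated fields. Once that convention is fixed, the general case follows from the computation above, and uniqueness in Lemma \ref{Tanaka} gives $f_{[A,B]}^p = g^p$.
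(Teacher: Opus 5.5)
The paper gives no proof of this lemma; it only cites Tanaka. Your argument is correct and is the standard one, so it goes through:
\begin{itemize}
\item the candidate family $g^p=-\sum_{r+s=p}[f_A^r,f_B^s]$ is $\g_p(\m)$-valued, and the sums are finite because $\g_k(\m)=0$ for $k<-\mu$;
\item your Leibniz--Jacobi computation gives exactly $dg^p=\sum_{t<0}[g^{p-t},\xi^t]$;
\item the Maurer--Cartan computation gives $g^p=\xi^p([A,B])$ for $p<0$;
\item uniqueness in Lemma \ref{Tanaka} then yields $f_{[A,B]}=g$.
\end{itemize}

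Two small corrections to the bookkeeping, neither of which affects the logic.

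First, in the $p<0$ step, the terms with $r,s<0$ do not come from the bracket $[\xi(A),\xi(B)]$ alone. They occur in all three pieces, with coefficients $-1$ from $A(\xi^p(B))$, $-1$ from $-B(\xi^p(A))$, and $+1$ from $[\xi(A),\xi(B)]$. The net coefficient $-1$ is what makes the total equal to $g^p$.

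Second, the sanity check in your last paragraph targets the wrong fields. Left-invariant vector fields generate right translations, which do not preserve $D_\m$ unless $\m$ is abelian. So they are not in $\mathcal{A}(U)$, and for them $\xi([A,B])=+[\xi(A),\xi(B)]$. The relevant test fields are the right-invariant ones, $A_h=\frac{d}{dt}\big|_{t=0}\exp(ta)h$, which generate left translations. For these $f_A(h)=\mathrm{Ad}(h^{-1})a$, and $[A,B]$ is the right-invariant field of $-[a,b]$, so $f_{[A,B]}=-[f_A,f_B]$.

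In fact no convention needs fixing. For the left Maurer--Cartan form one has $d\xi(X,Y)=-[\xi(X),\xi(Y)]$, so your displayed formula $\xi([A,B])=A(\xi(B))-B(\xi(A))+[\xi(A),\xi(B)]$ holds exactly as written.
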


\subsection{Variety of minimal rational tangents}

We use the same notation as in the last subsection. If $M(\m)$ admits an open embedding into a uniruled projective manifold of Picard number 1, then $M(\m)$ carries an additional structure, called the variety of minimal rational tangents (VMRT). In this case, one can consider the infinitesimal automorphisms of $(M(\m), D_\m)$ preserving the VMRT. We recall the definition of VMRT.

\begin{definition}
\label{VMRT}
Let $M$ be a uniruled projective manifold of Picard number 1. For an irreducible component $\mathcal{K}$ of the space of rational curves on $M$, let $\mathcal{K}_x$ denote the subscheme parameterizing rational curves passing through $x \in M$. An irreducible component $\mathcal{K}$ is called a {\it minimal rational component} if $\mathcal{K}_x$ is nonempty and projective for general $x \in M$.

Choose a minimal rational component $\mathscr{K}$. For a general point $x \in M$, we define a rational map $\tau_x: \mathscr{K}_x \dashrightarrow \mathbb{P}(T_xM)$ mapping a rational curve to its tangent direction. The image of $\tau_x$ is called the {\it variety of minimal rational tangents of $M$ at $x$}, or {\it VMRT of $M$ at $x$} for short.
\end{definition}

Suppose that $M$ is an open subset of a uniruled projective manifold $M$ of Picard number 1 and an equivariant compactification of $M(\m)$. We identify $M(\m)$ with the open orbit of $M$. Let 
\[\mathscr{C} \subset \mathbb{P}TM(\m)\]
be the VMRT structure induced on $M(\m)$, and assume that the action of $M(\m)$ preserves $\mathscr{C}$. This assumption is satisfied if $M$ admits a closed embedding into a projective space such that the VMRT of $M$ is determined by lines contained in $M$ and the action of $M(\m)$ on $M$ is induced by projective linear transformations of the ambient projective space. In particular, this assumption holds for the horospherical varieties considered in this paper. Let $e \in M(\m)$ the identity element, and let 
\[\widehat{\mathscr{C}_e} \subset T_eM(\m)\]
be the affine cone over the VMRT $\mathscr{C}_e$ at $e$. Let $W$ be the linear span of $\widehat{\mathscr{C}_e}$ and define 
\[G_0 = \{\varphi \in GL(W) | \varphi(\widehat{\mathscr{C}_e}) = \widehat{\mathscr{C}_e}\}.\]
Assume that $W$ contains $\g_{-1}$ and $\g_{-1}$ is invariant under $G_0$. Then there exists a Lie group homomorphism $G_0 \to GL(\g_{-1})$ and it induces a Lie algebra homomorphism $\g_0 \to \mathfrak{gl}(\g_{-1})$.

\begin{proposition}
\label{descending}
Under the notation and assumptions above, suppose moreover that the induced homomorphism $\g_0 \to \mathfrak{gl}(\g_{-1})$ is injective. Identifying $\g_0$ with its image image, assume that $\g_0 \subset \g_0(\m)$. Let $A$ be an arbitrary infinitesimal automorphism of $D_\m$ preserving $\mathscr{C}$, and let $\{f_A^k\}_{k \in \Z}$ be the family of functions given by Lemma \ref{Tanaka}. Then for all $k \in \Z$, $f_A^k$ takes values in $\g_k(\m, \g_0)$.
\end{proposition}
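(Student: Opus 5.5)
We argue by induction on $k$, following the strategy of Cheong \cite{C} and Yamaguchi \cite{Y}. For $k<0$ there is nothing to prove, since $f_A^k = \xi^k(A)$ takes values in $\g_k = \g_k(\m,\g_0)$ by definition. The substantive cases are $k=0$ and the passage from $k-1$ to $k$ for $k\geq 1$.

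\textbf{The case $k=0$.} By condition (3) of Lemma \ref{Tanaka}, for $X \in \g_{-1}$ and a point $h$ in the domain $U$ of $A$ we have
\[
f_A^0(h)(X) = [f_A^0(h), X] = -\, df_A^{-1}(X_h) \pmod{\text{lower terms}},
\]
where $X_h$ is the left-invariant field. The plan is to interpret this intrinsically. Translate by $h^{-1}$ so that $h$ moves to $e$; this is legitimate because the VMRT structure $\mathscr{C}$ and $D_\m$ are left-invariant by assumption. Then $f_A^0(h)|_{\g_{-1}}$ is the linear part at $e$ of the flow of the modified field $A - \rho_h(\sum_{k<0} f_A^k(h))$, which vanishes at $h$.

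Concretely, subtracting the negative-degree part via $\rho_h$ (Lemma \ref{Tanaka}, converse) gives an infinitesimal automorphism $A'$ that vanishes at $h$. Since $\rho_h(\m)$ consists of right-invariant fields, $A'$ still preserves $\mathscr{C}$. Indeed, right-invariant fields generate left translations, which preserve $\mathscr{C}$. The local flow of $A'$ fixes $h$ and preserves both $D_\m$ and $\widehat{\mathscr{C}_h}$. Its differential at $h$ therefore lies in the Lie algebra of $G_0$ acting on $W$, and this differential preserves $\g_{-1}$. Restricting to $\g_{-1}$ and using the injectivity hypothesis, we conclude that $f_A^0(h)$, viewed in $\mathfrak{gl}(\g_{-1})$, lies in the image of $\g_0$. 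The one thing to verify is that the linear isotropy action of $A'$ on $\g_{-1}$ equals $\pm f_A^0(h)|_{\g_{-1}}$. This is a direct consequence of (3) together with the definition of $\xi$, and it matches Tanaka's identification of $f_A^0$ with the isotropy action on the symbol algebra. Since $f_A^0(h)\in\g_0(\m)$ is determined by its restriction to $\g_{-1}$ ($\m$ being fundamental), $f_A^0$ takes values in $\g_0(\m,\g_0)=\g_0$.

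\textbf{Induction step, $k \geq 1$.} Suppose $f_A^{j}$ takes values in $\g_j(\m,\g_0)$ for all $j<k$. We must show that $[f_A^k(h), X] \in \g_{k-1}(\m,\g_0)$ for $X\in\g_{-1}$. Differentiating along left-invariant fields, condition (3) gives
\[
X_h f_A^{k-1} = [f_A^{k}(h), \xi^{-1}(X_h)] + \sum_{r<-1}[f_A^{k-1-r}(h),\xi^r(X_h)] = [f_A^k(h), X],
\]
because $\xi^r(X_h)=0$ for $r\neq -1$. The left-hand side is a derivative of a function valued in the linear subspace $\g_{k-1}(\m,\g_0)$, so it lies in that subspace. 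Hence $[f_A^k(h),\g_{-1}]\subset \g_{k-1}(\m,\g_0)$, which is exactly the defining condition for $f_A^k(h)\in\g_k(\m,\g_0)$. This completes the induction; sign conventions in (3) only change the expression by a sign and do not affect the conclusion.

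\textbf{Main obstacle.} The difficulty lies entirely in the base case $k=0$. There one must justify carefully that the isotropy linearization of a VMRT-preserving vanishing field lies in $\mathrm{Lie}(G_0)$. This requires that $A'$ preserves the cone $\widehat{\mathscr{C}_h}$ and hence its span $W$, so that the induced element of $\mathfrak{gl}(W)$ is tangent to $G_0$. It also requires identifying the restriction of that element to $\g_{-1}$ with $f_A^0(h)$. The first point comes from the invariance of $\mathscr{C}$ under the flow. For the second, I would first check it directly at $h=e$ using the explicit formula $f_A^0 = $ (the $\g_0(\m)$-part of $\nabla A$ in the left-invariant frame), and then transport the result to general $h$ by left translation.
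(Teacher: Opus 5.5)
Your proposal is correct and takes essentially the paper's route. You first show that $f^0_A$ takes values in $\g_0$, using that $A$ preserves $D_\m$ and $\mathscr{C}$ together with $\xi([A,\tilde v])=-[f^0_A,v]$ for $v\in\g_{-1}$; you then propagate to $k\geq 1$ via $df_A^k=\sum_{r<0}[f_A^{k-r},\xi^r]$.

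The one difference is at $k=0$. You subtract the right-invariant field $\rho_h(\sum_{k<0}f^k_A(h))$ so that the field vanishes at $h$, and its linear isotropy then visibly lies in $\mathrm{Lie}(G_0)$; this also handles $W\supsetneq\g_{-1}$ cleanly. The paper compresses this step into the one-line claim $\xi(\mathcal{L}_A\tilde v)\in[\g_0,v]$. Your first display has a harmless sign slip: (3) gives exactly $\tilde X f_A^{-1}=[f_A^0,X]$, which is what you use in the induction step.
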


\begin{proof}
Let $v \in \g_{-1}$ and $\tilde{v}$ be the left-invariant vector field associated to $v$. Since $A$ preserves $D_\m$ and $\mathscr{C}$, 
\[\xi(\mathcal{L}_A(\tilde{v})) \in [g_0, v] \]
where $\xi$ denotes the Maurer-Cartan form on $M(\m)$ and $\mathcal{L}$ denotes the Lie derivative. On the other hand, Lemma \ref{Lie bracket} implies
\[\xi(\mathcal{L}_A(\tilde{v})) = \xi([A, \tilde{v}]) = -[f_A^0, v].\]
Hence, $f_A^0(x) \in g_0$ for all $x \in M(\m)$. The differential $df_A^k$ of $f_A^k$ is determined by 
\[df_A^k = \sum_{r < 0} [f_A^{k-r}, \xi^r],\] 
so the inductive argument shows that $f_A^k$ takes values in $\g_k(\m, \g_0)$.
\end{proof}

\section{Infinitesimal automorphisms on $X$}

In this section, we apply the general theory to our case. Recall that $X$ is a smooth projective nonhomogeneous horospherical variety of Picard number 1 and $G = \mathrm{Aut}(X)$.

\subsection{Regular differential system on the open orbit}

Recall that $X$ has one closed \(G\)-orbit \(Z\) and one open \(G\)-orbit $X' := X \setminus Z$. The open $G$-orbit $X'$ is naturally identified with $G/H$ where \(H\) is the isotropy subgroup at \(o \in Y\). For each $i \in \Z$, let $\mathfrak{g}^i = \oplus_{k \geq i} \mathfrak{g}_k$. Let $D_\g$ denote the $G$-invariant differential system on $X'$ defined by   
\[D_\g = G \times_H \g^{-1} / \g^0\]
where $H$ acts on $\g^{-1}/\g^0$ via $h \cdot [X] = [Ad_h(X)]$. Since \(\n = \oplus_{k < 0} \mathfrak{g}_k\) is a fundamental graded Lie algebra, we have \[D_\g^k = G \times_H \mathfrak{g}^k / \mathfrak{g}^0, \quad k < 0.\]
Hence, the symbol algebra $\m(x)$ of $D_\g$ is isomorphic to $\n = \bigoplus_{k < 0} \g_k$ for every $x \in X'$. Therefore, $(X', D_\g)$ is a regular differential of type $\n$.

By Proposition \ref{existence}, the $N$-orbit \(O := N \cdot o\) containing $o$ is Zariski open and isomorphic to \(N\). For each \(g \in G\), define $\iota_g : N \to X'$ by 
\[\iota_g(h) = ghg^{-1} \cdot (g\cdot o) = gh \cdot o.\] 
Then $\iota_g$ is an isomorphism from $N$ to $g \cdot O$. Hence, \(X'\) can be covered by Zariski open subsets \(g \cdot O\), each of which is isomorphic to \(N\).

For any $g \in G$, $\iota_g^*(D_\g|_{g \cdot O})$ is a regular differential system of type $\n$ on $N$. Moreover, under the identification of the tangent bundle $TN$ with $N \times \n$, 
\[\iota_g^*(D_\g|_{g \cdot O}) = N \times \g_{-1}\]
for any $g \in G$. We denote by $D_\n$ the differential system $D_\n = N \times \g_{-1}$.
Then 
\[D_n^k = N \times \g_k, \quad k < 0\]
and $\iota_g$ is an isomorphism from \((N, D_\n)\) to \((g \cdot O, D_\g|_{g\cdot O})\) for all $g \in G$.

\subsection{Infinitesimal automorphisms of $(X', D_\g)$}

Let $\mathcal{A}$ be the sheaf of all infinitesimal automorphisms of $(X', D_\g)$. Let $\mathscr{C}$ be the VMRT structure induced on $X'$. Note that $\mathscr{C}$ is preserved by the action of $G$. Let $\mathcal{A}_\mathscr{C}$ be the subsheaf of $\mathcal{A}$ consisting of all infinitesimal automorphisms that preserve the VMRT. In this subsection, we compute $\mathcal{A}_{\mathscr{C}}(U)$ for a connected open subset $U$ of $X'$. We begin by recalling the result on the prolongation of $\n$.

\begin{proposition}[Proposition 25 of \cite{K}]
\label{prolongation}
Let $\g$ be the Lie algebra of the automorphism group of $X$. Give a gradation to $\g = \bigoplus_{k \in \Z} \g_k$ as in Proposition \ref{Kim}. Then $\g$ is the prolongation of $(\n, \g_0)$ where $\n = \bigoplus_{k < 0} \g_k$.    
\end{proposition}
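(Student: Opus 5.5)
The plan is to show that the natural map from $\g$ to the prolongation $\g(\n,\g_0)$ is an isomorphism of graded Lie algebras. This splits into two halves: injectivity, which holds essentially for free, and surjectivity, which needs a degree-by-degree comparison.

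\textbf{Embedding $\g$ into the prolongation.} For $u\in\g_k$ with $k\ge 0$, let $u$ act on $\n$ by $X\mapsto[u,X]$. The Jacobi identity shows that this is a degree-$k$ derivation, so it lies in $\g_k(\n)$, and for $k=0$ it lies in $\g_0$ by definition. Restricted to $\g_{-1}$ it maps into $\g_{k-1}$. By induction on $k$, this gives a graded Lie algebra map $\g\to\g(\n,\g_0)$ that is the identity on $\n$. Since $\n$ is fundamental, an element of degree $k\ge0$ is determined by its bracket with $\g_{-1}$. Injectivity therefore reduces to two facts:
\begin{itemize}
\item[(a)] $\g_0$ acts faithfully on $\g_{-1}$;
\item[(b)] no nonzero element of $\g_k$, $k\ge 1$, centralizes $\g_{-1}$.
\end{itemize}
Both are checked with the explicit decomposition $\g_{-p}=\mathfrak{l}_{-p}+V_{-\mu-p+1}$. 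For the $\mathfrak{l}$-part and the central $\C$ one uses simplicity of $\mathfrak{l}$ and the grading element, noting that $\C$ acts by a nonzero scalar on $V$ and $E_\alpha$ distinguishes degrees. For the $V$-part one uses that $V$ is an irreducible $\mathfrak{l}$-module, so a nonzero weight vector $v\in V_j$ with $j>-\mu$ cannot be killed by all of $\mathfrak{l}_{-1}$, since lowering from it reaches $V_{-\mu}$.

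\textbf{Surjectivity, degree by degree.} I would prove $\g_k(\n,\g_0)=\g_k$ by induction on $k\ge1$. The case $k=0$ holds by definition.

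Write $\n=\n_{\mathfrak l}\oplus V_{-\mu}$ with $[\n_{\mathfrak l},V_{-\mu}]=0$, where $\n_{\mathfrak l}=\bigoplus_{k<0}\mathfrak l_k$ and $V_{-\mu}$ is abelian, sitting in degree $-1$. An element $u\in\g_k(\n,\g_0)$ is determined by $u|_{\g_{-1}}:\g_{-1}\to\g_{k-1}$, and it must satisfy the derivation condition on brackets. The key steps are as follows.
\begin{enumerate}
\item Split $u|_{\mathfrak l_{-1}}$ and $u|_{V_{-\mu}}$ according to the target decomposition $\g_{k-1}=\mathfrak l_{k-1}+V_{k-\mu}$.
\item Use the derivation identity on $[\mathfrak l_{-1},V_{-\mu}]=0$ to get $[u(a),v]=[u(v),a]$ for $a\in\mathfrak l_{-1}$ and $v\in V_{-\mu}$. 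This equation couples the two restrictions.
\item Use the known prolongation of $(\n_{\mathfrak l},\mathfrak l_0)$ for the gradings in Theorem \ref{Pasquier}. By Yamaguchi's theorem this prolongation equals $\mathfrak l$, unless the grading is of projective or contact type; the contact case $(C_m,\alpha_1)$ needs separate care. This handles the projection of $u|_{\mathfrak l_{-1}}$ to $\mathfrak l_{k-1}$, after subtracting an element of $\mathfrak l_k$.
\item Show that the remaining part, which maps $\mathfrak l_{-1}$ into $V$-components and $V_{-\mu}$ into $\g_{k-1}$, is given by bracketing with an element of $V_{k-\mu+1}$. This is the $V$-part of $\g_k$.
\item Show that for $k>2\mu-1$, or beyond the top degree, the coupled equations in step 2 force $u=0$.
\end{enumerate}

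\textbf{Main obstacle.} The hardest step is step 4: controlling the $V$-components and the coupling through $V_{-\mu}$. This matters especially because $\g_0$ is not reductive, as it contains $V_{-\mu+1}$, so the standard Kostant/Yamaguchi cohomological criterion does not apply directly. I would first try to view $\g$ as a subalgebra of the reductive prolongation, or to compare $\g$ with a larger simple graded Lie algebra in which $\mathfrak l\oplus\C\oplus V$ embeds. The candidates are $\mathfrak{so}(2m+2)$ or $\mathfrak{so}(2m+3)$ for $B_m$, $\mathfrak{sp}(2m+2)$ for $C_m$, $E_6$ for $F_4$, and $\mathfrak{so}(8)$ or $B_3$ for $G_2$. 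One would then apply Yamaguchi's result there and cut down by the $\g_0$-condition.

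If that comparison fails, I would fall back on case-by-case dimension counts over the list in Theorem \ref{Pasquier}. These counts would use $H^1(\n,\g)$ in positive degrees, computed via Kostant's theorem on the $\mathfrak l$-part together with the weight decomposition of $V$.
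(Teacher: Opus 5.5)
The paper gives no proof of this statement; it quotes it from \cite{K}. So your proposal has to stand on its own.

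\textbf{What works.} Your injectivity half is sound:
\begin{itemize}
\item $\n$ is generated by $\g_{-1}$;
\item $\mathfrak l_k$ acts faithfully on $\mathfrak l_{-1}$ for $k\ge 0$;
\item the central $\C$ acts on $V_{-\mu}$ by a nonzero scalar $\lambda$;
\item $V_j$ has no nonzero $\mathfrak l_{-1}$-invariants for $j>-\mu$, by irreducibility.
\end{itemize}

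\textbf{The gap.} Surjectivity is the whole content of the proposition, and you do not prove it. Steps 4 and 5 are only stated as goals, and your last paragraph lists strategies without carrying any out. Three ingredients are missing.
\begin{itemize}
\item \emph{Killing $u|_{V_{-\mu}}$ for $k\ge 2$.} Once step 3 has removed the $\mathfrak l_k$-part, you have $u(\mathfrak l_{-1})\subset V$. Since $V$ is abelian, your identity $[u(a),v]=[u(v),a]$ gives $[u(v),\mathfrak l_{-1}]=0$. The same two facts you used for (b) then force $u|_{V_{-\mu}}=0$. What remains is a degree-$k$ $1$-cocycle $\mathfrak l_-\to V$, which must be shown to be a coboundary.
\item \emph{Kostant's theorem.} $H^1(\mathfrak l_-,V(\omega_\beta))$ is the irreducible $\mathfrak l_0$-module generated by the class of $e_{-\alpha}^*\otimes v_{-\omega_\beta}$. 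Because $\langle\omega_\beta,\alpha^\vee\rangle=0$, this cocycle maps $\mathfrak l_{-1}$ into $V_{-\mu}$, so it sits in degree $0$. There it is harmless, because $\g_0$ is prescribed, and it contributes nothing in positive degrees. This settles steps 4 and 5 at once for $k\ge2$; step 5 then becomes automatic.
\item \emph{Degree $1$.} This degree needs separate bookkeeping, which step 4 overlooks, because $\g_0$ contains the central $\C$. So $u$ may have components $\mathfrak l_{-1}\to\C$ and $V_{-\mu}\to\C$.
  \begin{itemize}
  \item The relations from $[V_{-\mu},V_{-\mu}]=0$ kill the second component only because $\dim V_{-\mu}\ge 2$ in every case. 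If $\dim V_{-\mu}=1$, the map with $u|_{\mathfrak l_-}=0$ and $u(v)=1\in\C$ would be a genuine element of $\g_1(\n,\g_0)$ not in $\g_1$.
  \item The first component must be killed separately, using the relation $\lambda c(a)v+a\cdot w(v)=0$ coming from $[\mathfrak l_{-1},V_{-\mu}]=0$.
  \end{itemize}
\end{itemize}

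\textbf{Other inaccuracies.}
\begin{itemize}
\item \emph{Yamaguchi's theorem is misquoted.} For $(\m,\g_0)$ the only exceptions are $(A_l,\alpha_1)$ and $(C_l,\alpha_1)$. Contact gradings such as $(G_2,\alpha_2)$, which does occur here, are exceptions only for $\g(\m)$.
\item \emph{The case $(C_m,\alpha_1)$ does not actually arise.} The requirement $\dim\n=\dim X$ forces the grading in the type-$C$ family to be by $\alpha_{i+1}$. For example, for $(C_2,\alpha_1,\alpha_2)$ one has $\dim X=5$, while grading by $\alpha_1$ gives $\dim\n=4$. With the literal $\alpha_1$-grading one would have $\dim V_{-\mu}=1$, and the extra degree-$1$ element above shows that no ``separate care'' could rescue it.
\item \emph{The obstacle is misdiagnosed.} The criterion ``$\g=\g(\n,\g_0)$ iff $H^1(\n,\g)_k=0$ for $k\ge1$'' needs only transitivity, not reductivity of $\g_0$. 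Only the computation of $H^1$ needs input, and the reduction above supplies it.
\item \emph{The proposed ambient algebras do not fit.} For instance, $\dim(\mathfrak f_4\oplus\C\oplus V(\omega_4))=79>78=\dim\mathfrak e_6$. Even inside a suitable ambient algebra, $\n$ is a proper subalgebra of the negative part of the induced grading. So Yamaguchi's theorem there says nothing directly about $(\n,\g_0)$.
\end{itemize}
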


To apply Proposition \ref{descending}, we compute the linear automorphism group of $\mathscr{C}$.
Recall that we have fixed a point $o$ in the closed orbit $Y$. Let $\mathscr{C}_o \subset \mathbb{P}T_oX$ be the VMRT of $X$ at $o$, and let $\widehat{\mathscr{C}_o}$ be the affine cone over $\mathscr{C}_o$. Let $V$ be the linear span of $\widehat{\mathscr{C}_o}$ and define  
\[G(\widehat{\mathscr{C}_o}) = \{\phi \in GL(V) : \phi(\widehat{\mathscr{C}_o}) = \widehat{\mathscr{C}_o}\}.\]

\begin{proposition}[Theorem 5.4 and Lemma 8.3 of \cite{HK}, Lemma 6.2 of \cite{HL1}, and Proposition 4.11 of \cite{HL2}]
\label{g_0}
Let $\mathfrak{g}(\widehat{\mathscr{C}_o})$ be the Lie algebra of the linear automorphism group $G(\widehat{\mathscr{C}_o})$. $\g_{-1}$ is a subspace of $V$ that is invariant under $G(\widehat{\mathscr{C}_o})$. Moreover, the induced homomorphism $\mathfrak{g}(\widehat{\mathscr{C}_o}) \to \mathfrak{g}_0(\n)$ is injective and its image is $\mathfrak{g}_0$.
\end{proposition}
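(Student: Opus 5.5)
The plan is to work with the $H$-module structure of $T_oX$ and to determine $\widehat{\mathscr{C}_o}$ explicitly as a union of $H$-orbit closures. Since $X' = G/H$, we have $T_oX \cong \g/\h \cong \n$ as vector spaces, and the isotropy representation of $H$ on $T_oX$ is induced by the adjoint action on $\g/\g^0$. The subspace $\g^{-1}/\g^0 \cong \g_{-1}$ is $H$-stable, and it equals the fibre $D_\g(o)$. Since $G$ preserves $\mathscr{C}$, the cone $\widehat{\mathscr{C}_o}$ is $H$-stable. Using the classification in Theorem \ref{Pasquier}, I would first identify $\widehat{\mathscr{C}_o}$ case by case, following \cite{HK} and \cite{HL1}, \cite{HL2}. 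Lines through $o$ either lie in the closed $L$-orbit $Y \cong L/P(\alpha)$ or leave it. I expect $\mathscr{C}_o$ to be the closure of an $H$-orbit (or a union of two such pieces) whose tangent directions split into directions in $\mathfrak{l}_{-1} \subset \g_{-1}$ and directions in $V_{-\mu}$. In particular I expect the linear span $V$ of $\widehat{\mathscr{C}_o}$ to contain $\g_{-1} = \mathfrak{l}_{-1} + V_{-\mu}$.

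The first claim is that $\g_{-1}$ is $G(\widehat{\mathscr{C}_o})$-invariant. I would characterize $\g_{-1}$ intrinsically in terms of the projective geometry of $\mathscr{C}_o$, for instance as the linear span of the singular locus of $\mathscr{C}_o$, of a distinguished irreducible component, or of the unique closed orbit of $G(\widehat{\mathscr{C}_o})$ on $\mathscr{C}_o$. Any of these is preserved by every linear automorphism of the cone, so invariance follows. This gives a homomorphism $G(\widehat{\mathscr{C}_o}) \to GL(\g_{-1})$ and hence $\mathfrak{g}(\widehat{\mathscr{C}_o}) \to \mathfrak{gl}(\g_{-1})$.

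For injectivity, I would show that $\widehat{\mathscr{C}_o}$ is recovered from its intersection with $\g_{-1}$, together with the bracket data, in such a way that an endomorphism of $V$ vanishing on $\g_{-1}$ and preserving the cone must vanish. Concretely, I would use that the $\mathfrak{l}_{-1}$-directions and the $V_{-\mu}$-directions generate $V$ through the secant/tangent geometry of the cone. To land in $\g_0(\n)$, I would check that the induced map on $\g_{-1}$ extends to a derivation of $\n$. One route is to verify that it preserves the bracket $\Lambda^2 \g_{-1} \to \g_{-2}$, whose kernel should again be detected by $\mathscr{C}_o$: for example, pairs of tangent vectors lying on a common linear subspace of the cone bracket to zero.

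Finally, for the image: $H$ preserves $\widehat{\mathscr{C}_o}$, and the derivative of its isotropy action on $\g_{-1}$ is $\mathrm{ad}(\g_0)$. This action is faithful by Proposition \ref{prolongation}, since the prolongation structure forces $\g_0 \hookrightarrow \g_0(\n)$. Hence $\g_0$ lies in the image. The reverse inclusion is the main obstacle. Here I would compute $\dim \mathfrak{g}(\widehat{\mathscr{C}_o})$ directly in each case of Theorem \ref{Pasquier}, using the explicit description of $\mathscr{C}_o$ as a horospherical-type or odd-symplectic-type variety. I would then compare this with $\dim \g_0 = \dim \mathfrak{l}_0 + 1 + \dim V_{-\mu+1}$. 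The unipotent part $V_{-\mu+1}$ of $\g_0$ should account exactly for the non-reductive automorphisms of the cone.
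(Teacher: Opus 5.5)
\textbf{Verdict: genuine gap.} Your outline relies on the same case-by-case VMRT computations that the paper imports from \cite{HK}, \cite{HL1}, \cite{HL2}. But it misses the dichotomy that organizes the paper's proof, and in the one case where your steps do real work they cannot succeed.

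\textbf{The cases with $V=\g_{-1}$.} Apart from the odd symplectic Grassmannians $Gr_\omega(k,2m+1)$ with $k<m$ (the case the paper sends to \cite{HL1}), the span $V$ of $\widehat{\mathscr{C}_o}$ is exactly $\g_{-1}$. In those cases invariance of $\g_{-1}$ and injectivity are tautological. The only content is the equality $\g(\widehat{\mathscr{C}_o})=\g_0$ inside $\mathfrak{gl}(\g_{-1})$, which is your final dimension count.

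Your intrinsic characterizations of $\g_{-1}$ are therefore unnecessary there, and they can pick out the wrong subspace. To see this, write $o=[A]$ with $A\subset\C^{2m}$ isotropic of dimension $k$, put $K=\ker\omega$ and $U=A^{\perp}/A$, and let $\omega$ also denote the induced form on $U\oplus K$, whose kernel is $K$. Then
\[
T_oX=\bigl(A^*\otimes(U\oplus K)\bigr)\oplus\mathrm{Sym}^2A^*,\qquad \widehat{\mathscr{C}_o}=\{a\otimes c+t\,a^2 : a\in A^*,\ c\in U\oplus K,\ t\in\C\}.
\]
For $k=m$ we have $U=0$ and $T_oX=\g_{-1}$. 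The VMRT is smooth and irreducible, so the singular-locus and component options are vacuous. The unique closed orbit of its symmetry group is $\mathbb{P}(A^*\otimes K)=\mathbb{P}(V_{-\mu})$, which does not span $\g_{-1}$.

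\textbf{The case $k<m$, where your argument fails.} Here $\g_{-1}=A^*\otimes(U\oplus K)$ and $\g_{-2}=\mathrm{Sym}^2A^*$, with bracket $[a\otimes c,b\otimes c']=\omega(c,c')\,ab$. The subspace $\g_{-1}$ is the span of the closed stratum $\{t=0\}$, so your first step survives. Your injectivity and bracket-detection steps do not.
\begin{itemize}
\item Rescaling the summand $\mathrm{Sym}^2A^*$ in the displayed splitting while fixing $\g_{-1}$ preserves the cone. This gives a nonzero element of $\g(\widehat{\mathscr{C}_o})$ that vanishes on $\g_{-1}$.
\item For every $M\in\mathfrak{gl}(U\oplus K)$, the map $1\otimes M$ (identity on $\mathrm{Sym}^2A^*$) also preserves the cone. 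If $M(K)\not\subset K$, then $1\otimes M$ does not preserve the radical $A^*\otimes K$ of the bracket. Hence it is not a derivation, and the image does not lie in $\g_0(\n)$.
\item Your heuristic that vectors on a common linear space of the cone bracket to zero is false: on the linear space $a\otimes(U\oplus K)$ one gets $\omega(c,c')\,a^2\neq 0$.
\end{itemize}
Already for $(k,m)=(2,3)$ one finds $\dim\g(\widehat{\mathscr{C}_o})\ge 13>10=\dim\g_0$. The cone does not see $\omega$, so no argument can give injectivity, or ``image $=\g_0$'', in this case. The statement as written is too strong here.

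\textbf{The missing idea.} The bracket has to come from the differential system, not from the VMRT. What is true, and what the proof of Proposition \ref{descending} actually uses, is weaker. By Lemma \ref{Tanaka}, $f_A^0$ already takes values in $\g_0(\n)$. So it suffices that the image of $\g(\widehat{\mathscr{C}_o})$ in $\mathfrak{gl}(\g_{-1})$, intersected with $\g_0(\n)$, equals $\g_0$.

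This can be checked directly. The image is $\mathfrak{gl}(A)\otimes 1+1\otimes\mathfrak{gl}(U\oplus K)$, the symmetry algebra of the Segre cone $\widehat{\mathscr{C}_o}\cap\g_{-1}$. The derivation condition forces $M$ to preserve $\omega$ up to scale, and hence to preserve $K$. This cuts the image down to $\bigl(\mathfrak{gl}(A)\oplus\mathfrak{sp}(U)\oplus\C\bigr)\rhd\mathrm{Hom}(U,K)$, which is $(\mathfrak{l}_0\oplus\C)\rhd V_{-\mu+1}=\g_0$.
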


\begin{proof}
Suppose first that $X$ is not of type $(C_m, \alpha_i, \alpha_{i+1})$ with $i < m -1$. In this case, $V = \g_{-1}$ and the assertion follows from Theorem 5.4 and Lemma 8.3 of \cite{HK}, together with Proposition 4.11 of \cite{HL2}. Suppose now that $X$ is of the type $(C_m, \alpha_i, \alpha_{i+1})$ with $i < m -1$. Then $X$ is an odd symplectic Grassmannian, and the assertion follows from Lemma 6.2 of \cite{HL1}.
\end{proof}

Now we compte $\mathcal{A}_\mathscr{C}$.

\begin{proposition}
\label{infinitesimal automorphism}
Let $U$ be a connected open subset of $X'$. Then $\mathcal{A}_\mathscr{C}(U) = \g$.
\end{proposition}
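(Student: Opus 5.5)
The plan is to prove the two inclusions separately. First, $\g \subset \mathcal{A}_\mathscr{C}(U)$: $G$ acts on $X'$ preserving $D_\g$, since $D_\g$ is $G$-invariant by construction, and preserving $\mathscr{C}$, as noted above. Hence the fundamental vector fields of $G$, restricted to $U$, are infinitesimal automorphisms of $(X', D_\g)$ preserving $\mathscr{C}$. This gives a Lie algebra (anti)homomorphism $\g \to \mathcal{A}_\mathscr{C}(U)$. It is injective because the $G$-action on $X$ is effective ($G = \mathrm{Aut}(X)$), so a nonzero element of $\g$ gives a nonzero holomorphic vector field on $X$. Such a field cannot vanish on the nonempty open set $U$ by the identity theorem.

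For the reverse inclusion, I first reduce to a statement about $(N, D_\n)$. Since $X'$ is covered by the open sets $g\cdot O$, and each $\iota_g$ is an isomorphism $(N, D_\n) \to (g\cdot O, D_\g|_{g\cdot O})$ carrying the VMRT structure to $\mathscr{C}$ (again by $G$-invariance of $\mathscr{C}$), it suffices to work locally. Take $A \in \mathcal{A}_\mathscr{C}(U)$, pick $g$ with $U \cap g\cdot O \neq \emptyset$, and pull $A$ back to a connected open $U' \subset N$. I identify $N$ with $M(\n)$; both are simply connected unipotent groups with Lie algebra $\n$, or I simply work with $N$ directly, since Lemma \ref{Tanaka} only uses the Maurer--Cartan form. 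This yields an infinitesimal automorphism of $D_\n$ on $U'$ preserving the pulled-back VMRT structure. I then check the hypotheses of Proposition \ref{descending} at the identity: the VMRT at $e$ corresponds under $\iota_g$ to the VMRT at $g\cdot o$, which is $G$-translate to $\mathscr{C}_o$. Proposition \ref{g_0} gives that $\g_{-1}$ lies in the span $V$, is invariant, and that the induced map into $\g_0(\n)$ is injective with image $\g_0$. Therefore Proposition \ref{descending} applies, and every $f_A^k$ takes values in $\g_k(\n, \g_0)$.

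By Proposition \ref{prolongation}, $\g(\n, \g_0) = \g$, which is finite dimensional, so $f_A^k \equiv 0$ for $k$ large. I then apply the uniqueness statement of Lemma \ref{Tanaka}. The family $\{f_A^k\}$ agrees at a point $h$ with the family of the global automorphism attached to $a = \sum_k f_A^k(h) \in \g$ by the converse part. Both families satisfy the same first-order system $df^k = \sum_{r<0}[f^{k-r}, \xi^r]$, which is a finite system of linear ODEs along paths since only finitely many $k$ occur. Hence they coincide on connected $U'$, and so $A = \rho_h(a)$. Thus $\dim \mathcal{A}_\mathscr{C}(U') \le \dim \g$. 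Next I compare with the image of $\g$ from the first step, which is also contained in $\mathcal{A}_\mathscr{C}(U')$ and has dimension $\dim \g$; this gives equality on $U'$. Finally, an infinitesimal automorphism on connected $U$ that agrees with an element of $\g$ on the open subset $U \cap g\cdot O$ agrees with it on all of $U$ by analytic continuation, so $\mathcal{A}_\mathscr{C}(U) = \g$.

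The main obstacle I expect is the careful transfer of the VMRT hypotheses. Proposition \ref{descending} requires that $N$ preserve $\mathscr{C}$ on $g\cdot O$ and that the cone at the base point has $G_0$ identified with $\g_0$ inside $\g_0(\n)$ compatibly with the gradation. I would handle this by using that $\iota_g$ is the restriction of the action of $g \in G$ composed with the $N$-orbit map, so everything is $G$-equivariantly transported from the point $o$, where Proposition \ref{g_0} is stated.
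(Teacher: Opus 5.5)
Your proposal is correct and follows essentially the same route as the paper: you embed $\g$ via fundamental vector fields and bound $\dim \mathcal{A}_\mathscr{C} \le \dim \g$ on a piece of an $N$-orbit using Proposition \ref{descending}, Proposition \ref{g_0}, Proposition \ref{prolongation} and the uniqueness in Lemma \ref{Tanaka}, then pass to an arbitrary connected $U$ by analytic continuation. The differences are cosmetic: you work with translates $g\cdot O$ where the paper just uses density of $O$ in $X'$, and you spell out the linear-ODE uniqueness argument that the paper cites as the uniqueness part of Lemma \ref{Tanaka}.
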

\begin{proof}
The proof is an adaptation of Propositions 8 and 10 in \cite{C}. We first assume that \(U\) is a connected open subset of \(O\). Through $\iota_e$, we identify $\mathcal{A}_\mathscr{C}(U)$ with the Lie algebra of infinitesimal automorphisms of 
\[(\iota_e^{-1}(U), D_n|_{\iota_e^{-1}(U)})\]
preserving $\iota_e^*\mathscr{C}$.
 
Fix $h \in \iota_e^{-1}(U)$. For $A \in \mathcal{A}_\mathscr{C}(U)$, let 
\[\{f_A^k\}_{k \in \Z}\]
be the family of functions associated with $A$ by Lemma \ref{Tanaka}. Proposition \ref{descending} together with Proposition \ref{prolongation} and Proposition \ref{g_0} implies that
\[a_A := \sum_{k \in \Z} f_A^k(h)\]
is a well defined element in $\g$. By the uniqueness assertion in Lemma \ref{Tanaka}, the map
\[\mathcal{A}_\mathscr{C}(U) \to \g, \quad A \mapsto a_A\]
is injective. Consequently, 
\[\mathrm{dim}\;\mathcal{A}_\mathscr{C}(U) \leq \mathrm{dim}\; \g.\]

Conversely, $D_\g$ and $\mathscr{C}$ are invariant under the action of $G$. Hence, every $v \in \g$ determines a fundamental vector field
\[\tilde{v}(x) := \frac{d}{dt}\big|_{t=0} e^{-tv} x \]
whose restriction to $U$ belongs to $\mathcal{A}_\mathscr{C}(U)$. Since the action of $G$ is effective, the map
\[\g \to \mathcal{A}_\mathscr{C}(U), \quad v \mapsto \tilde{v}|_U\]
is injective. Therefore,
\[\mathrm{dim}\; \g \leq \mathrm{dim}\;\mathcal{A}_\mathscr{C}(U).\]
It follows that 
\[\mathcal{A}_\mathscr{C}(U) \cong \g.\]
Under the natural identification by fundamental vector fields, we write
\[\mathcal{A}_\mathscr{C}(U) = \g.\]

Now, let $U$ be an arbitrary connected open subset of $X'$. Since $O$ is dense in $X'$, $O \cap U$ is not empty. Choose a nonempty connected open subset $V$ of $O \cap U$. The restriction maps
\[\mathcal{A}_\mathscr{C}(X') \to \mathcal{A}_\mathscr{C}(U) \to \mathcal{A}_\mathscr{C}(V)\]
are injective because a holomorphic vector field on a connected complex manifold is determined by its restriction to a nonempty open subset. Since $\tilde{v} \in \mathcal{A}_\mathscr{C}(X')$ for all $v \in \g$, we have 
\[\g \subset \mathcal{A}_\mathscr{C}(X') \subset \mathcal{A}_\mathscr{C}(U) \subset \mathcal{A}_\mathscr{C}(V) = \g\] 
Therefore, 
\[\mathcal{A}_\mathscr{C}(U) = \g.\]
\end{proof}

\section{Uniqueness}
In this section, we prove the uniqueness of the EC-structure of $N$ on \(X\) by following Cheong's method in \cite{C}. More specifically, we extends a biholomorphic map between open orbits to the entire space using the infinitesimal automorphisms of a differential system and the prolongation of a graded Lie algebra.

Suppose that \(A_1 : N \times X \to X\) and \(A_2 : N \times X\to X\) are two EC-structures on \(X\). For \(i=1,2\), let \(O_i\) denote the Zariski open orbit fix a point \(o_i \in O_i\). Both \(O_1\) and \(O_2\) are contained in the open \(G\)-orbit \(X'\).

\begin{notation}
For $i=1, 2$, let $\alpha_i : N \to O_i$ be defined by $\alpha_i(h) = A_i(h, o_i)$. For each $h \in N$, let $\phi_{i, h} : X \to X$ be the map given by $\phi_{i,h} (x) = A_i(h, x)$ for $i = 1,2$. For each $h \in N$, let $\ell_h : N \to N$ denote the left multiplication by $h$.
\end{notation}

Note that for $i=1,2$, $\alpha_i$ is an isomorphism of algebraic varieties, and it satisfies 
\[\alpha_i \circ \ell_h = \phi_{i,h} \circ \alpha_i\]
for all $h \in N$.

\begin{notation}
For $i=1,2$, let $E_i$ denote the differential systems on $N$ defined by 
\[E_i := \alpha_i^*(D|_{O_i}).\]
For each $h \in N$, denote by $\m_i(h)$ the symbol algebra of $E_i$ at $h$ ($i=1,2$).
\end{notation}

Recall that $D_\g = G \times_H g^{-1}/g^0$ is a $G$-invariant differential system on $X'$. $D_\g$ is invariant under $\phi_{i,h}$, since $\phi_{i,h} \in \mathrm{Aut}(X) = G$.

\begin{lemma}
\label{differential system}
For each $i=1, 2$, $E_i$ satisfies the following properties.
\begin{enumerate}[label = (\arabic*)]
    \item \(E_i\) is invariant under the left multiplications of $N$;
    \item $E_i^k = \alpha_i^*(D_\g^k|_{O_i})$;
    \item $(N, E_i)$ is a regular differential system of type $\n$.
\end{enumerate}
\end{lemma}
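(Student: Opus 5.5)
We prove the three assertions in order. Each one reduces, through the equivariance identity $\alpha_i \circ \ell_h = \phi_{i,h}\circ \alpha_i$ and the fact that $\phi_{i,h}\in G$, to the corresponding property of $D_\g$ on $X'$. Throughout, $D$ in the definition of $E_i$ means $D_\g$.

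\emph{Invariance (1).} Fix $h\in N$. Since $\phi_{i,h}\in \mathrm{Aut}(X)=G$ and $D_\g$ is $G$-invariant, $(\phi_{i,h})_*D_\g = D_\g$ on $X'$. The map $\phi_{i,h}$ preserves the orbit $O_i$, so
\[
\ell_h^*E_i=\ell_h^*\alpha_i^*(D_\g|_{O_i})=(\alpha_i\circ\ell_h)^*D_\g=(\phi_{i,h}\circ\alpha_i)^*D_\g=\alpha_i^*\phi_{i,h}^*D_\g=\alpha_i^*D_\g=E_i .
\]
Hence $E_i$ is invariant under left multiplication.

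\emph{Derived systems (2).} Since $\alpha_i\colon N\to O_i$ is a biholomorphism onto an open subset of $X'$, pullback by $\alpha_i$ commutes with the Lie bracket of vector fields: $\alpha_i^*[X,Y]=[\alpha_i^*X,\alpha_i^*Y]$. It also sends the sheaf of sections of a subbundle to the sheaf of sections of the pulled-back subbundle. The derived sheaves are built recursively by $\mathcal{D}^{k}=\mathcal{D}^{k+1}+[\mathcal{D},\mathcal{D}^{k+1}]$, so an induction on $k$ starting from $E_i^{-1}=\alpha_i^*(D_\g|_{O_i})$ gives $\mathcal{E}_i^k=\alpha_i^*(\mathcal{D}_\g^k|_{O_i})$. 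Passing to the associated bundles gives $E_i^k=\alpha_i^*(D_\g^k|_{O_i})$.

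\emph{Regularity and type (3).} From Section 5.1, $D_\g^k=G\times_H\g^k/\g^0$ are vector bundles, so by (2) each $E_i^k$ is locally free and $(N,E_i)$ is regular. The differential $d\alpha_i$ at $h$ identifies each $E_i^k(h)/E_i^{k+1}(h)$ with $D_\g^k(x)/D_\g^{k+1}(x)$, where $x=\alpha_i(h)$. Because the symbol bracket is induced by brackets of local vector fields and $\alpha_i$ is a local biholomorphism, $d\alpha_i$ gives a graded Lie algebra isomorphism $\m_i(h)\cong\m(x)$. Section 5.1 shows $\m(x)\cong\n$ for every $x\in X'$, and $O_i\subset X'$, so $(N,E_i)$ is of type $\n$.

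None of these steps is hard; the point that needs the most care is that the symbol algebra is functorial under local biholomorphisms, which is what lets us transfer type $\n$ from $X'$. We will also note that (1) is not needed for (3), but together they show that $E_i$ is left-invariant of type $\n$. This prepares a comparison of $E_i$ with $D_\n$ via an automorphism of $\n$ in the sequel.
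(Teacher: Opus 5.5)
Your proposal is correct and follows essentially the same route as the paper: (1) comes from $\alpha_i\circ\ell_h=\phi_{i,h}\circ\alpha_i$ together with the $G$-invariance of $D_\g$, (2) comes from $\alpha_i$ being an isomorphism $(N,E_i)\to(O_i,D_\g|_{O_i})$, and (3) comes from transporting the symbol algebra through $d\alpha_i$. The one difference is that you check $\m_i(h)\cong\n$ at every $h\in N$, using $\m(x)\cong\n$ for all $x\in X'$, whereas the paper only writes out the check at $h=e$ and leaves the extension to all points implicit.
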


\begin{proof}
(1) follows from the fact that $\alpha_i \circ \ell_h = \phi_{i, h} \circ \alpha_i$ and that $D_\g$ is invariant under $\phi_{i, h}$. (2) follows from the fact that $\alpha_i$ is an isomorphism from $(N, E_i)$ to $(O_i, D_\g|_{O_i})$. For (3), observe that $\m_i(e)$ is isomorphic to the symbol algebra $\m(o_i)$ of $D_\g$ at $o_i = \alpha_i(e)$. As discussed in Section 3.2, $\m(o_i) \cong \n$, and therefore $\m_i(e) \cong \n$.
\end{proof}

To prove the uniqueness of EC-structures, we construct a biholomorphic map between open orbits.

\begin{lemma}
\label{local isomorphism}
There exists a Lie group automorphism \(F: N \to N\) such that the biholomorphic map $\Psi: O_1 \to O_2$ defined by $\Psi = \alpha_2 \circ F \circ \alpha_1^{-1}$ satisfies 
\begin{enumerate}[label=(\arabic*)]
    \item \(\Psi(o_1) = o_2\)
    \item \(\Psi(A_1(h, x)) = A_2(F(h), \Psi(x))\) for all \(h \in N\)
    \item for each \(u \in O_1\), the differential \(d\Psi_u\) sends \((D_\g)_u\) to \((D_\g)_{\Psi(u)}\) and \(\mathscr{C}_u\) to \(\mathscr{C}_{\Psi(u)}\). 
\end{enumerate}
\end{lemma}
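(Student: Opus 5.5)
Given $F$, properties (1) and (2) are formal. (1) holds because $\alpha_i(e)=o_i$ and $F(e)=e$. For (2), using $\alpha_i\circ\ell_h=\phi_{i,h}\circ\alpha_i$ and the fact that $F$ is a homomorphism, we compute for $x=\alpha_1(g)$:
\[\Psi(A_1(h,x))=\alpha_2(F(hg))=\alpha_2(F(h)F(g))=A_2(F(h),\Psi(x)).\]
So the whole content lies in choosing $F$ so that (3) holds.

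The plan is to build $F$ from the left-invariant structures on $N$ pulled back by the $\alpha_i$. By Lemma \ref{differential system}, each $E_i=\alpha_i^*(D_\g|_{O_i})$ is a left-invariant regular differential system of type $\n$ on $N$. Hence $E_i$ is determined by the subspace $E_i(e)\subset T_eN=\n$, and its filtration $E_i^k(e)$ is a filtration of $\n$ by subalgebras. Similarly, the pulled-back VMRT structure $\alpha_i^*\mathscr{C}$ is left-invariant, because $A_i$ acts through $G$, which preserves $\mathscr{C}$. It is therefore determined by the cone $\widehat{\mathscr{C}}^{(i)}_e:=d(\alpha_i^{-1})(\widehat{\mathscr{C}_{o_i}})\subset\n$. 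It thus suffices to find a Lie algebra automorphism $f$ of $\n$ with $f(E_1(e))=E_2(e)$ and $f(\widehat{\mathscr{C}}^{(1)}_e)=\widehat{\mathscr{C}}^{(2)}_e$, and to let $F$ be the automorphism of the unipotent group $N$ with $dF_e=f$ (it exists since $\exp:\n\to N$ is an isomorphism). Then $F$ commutes with left translations up to $F$, so $dF$ carries $E_1$ to $E_2$ and the pulled-back cone structures to each other at every point. Translating back through the $\alpha_i$ gives (3).

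To find $f$, I would compare each pair $(E_i(e),\widehat{\mathscr{C}}^{(i)}_e)$ with the standard model $(\g_{-1},\widehat{\mathscr{C}_o})$ coming from $\iota_e$ in Section 5.1. Since $G$ acts transitively on $X'$, choose $g_i\in G$ with $g_i\cdot o=o_i$. Then $g_i^{-1}\circ\alpha_i:N\to g_i^{-1}O_i$ is an open embedding through $o$ that intertwines $D_\g$ and $\mathscr{C}$ with the pulled-back structures. Via $\iota_e$, its differential at $e$ gives a linear isomorphism $\n\to\n$ sending $E_i(e)$ to $\g_{-1}$ and $\widehat{\mathscr{C}}^{(i)}_e$ to $\widehat{\mathscr{C}_o}$.

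This map need not be a Lie algebra isomorphism, and that is the main obstacle. To handle it, I would instead use that both $(N,E_i,\alpha_i^*\mathscr{C})$ are locally isomorphic to $(N,D_\n,\iota_e^*\mathscr{C})$ via the composite $\iota_e^{-1}\circ g_i^{-1}\circ\alpha_i$, which maps left-invariant structure to left-invariant structure. The vector fields generating left translations on $N$ for $E_i$ (right-invariant fields for $A_i$) are infinitesimal automorphisms. Transported to $O$, they form a subalgebra of $\mathcal{A}_\mathscr{C}(U)=\g$ (Proposition \ref{infinitesimal automorphism}) isomorphic to $\n$ and acting simply transitively near $o$.

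Conjugating by $g_i$, this produces subalgebras $\n_i:=\mathrm{Ad}_{g_i^{-1}}(\text{Lie}(A_i(N)))\subset\g$, each complementary to $\h$ and acting with an open orbit at $o$. By the Lie-algebra form of Lemma \ref{Tanaka}, each $\n_i$ corresponds to a graph $\{v+\varphi_i(v)\}$ over $\n$ with $\varphi_i:\n\to\h$. Composing with the grading element of $\g_0$ (the $\C$-factor together with $E_\alpha$) and taking the limit $t\to 0$ of $\exp(tE)$-conjugation contracts $\n_i$ toward the graded piece, which identifies the associated graded of $\n_i$ with $\n$. Then $f$ is the composite $\n\cong\n_1\to\n\to\n_2\cong\n$ obtained from these graded identifications, adjusted by an element of $G_0$ so that it matches the cones, which is possible by Proposition \ref{g_0}.

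I expect the real difficulty to be making this last step rigorous, namely showing that the abstract isomorphism $\n_1\cong\n_2$ is realized compatibly with the filtrations and the cones. If the contraction argument fails, my fallback is the observation that $\Psi$ only needs to preserve the left-invariant structures: any Lie isomorphism $\n\to\n$ that preserves $E(e)$ and the cone works, and such a map can be produced from the symbol-algebra isomorphisms $\m_i(e)\cong\n$ of Lemma \ref{differential system}(3), composed with elements of $G(\widehat{\mathscr{C}_o})$.
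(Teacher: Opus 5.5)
\textbf{There is a genuine gap: the proposal never constructs the automorphism $f$ that (3) needs.}

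Your reduction is correct. (1) and (2) are formal, and since $E_i$ and $\alpha_i^*\mathscr{C}$ are left-invariant, (3) amounts to finding a Lie algebra automorphism $f$ of $\n$ with $f(E_1(e))=E_2(e)$ and $f(\widehat{\mathscr{C}}^{(1)}_e)=\widehat{\mathscr{C}}^{(2)}_e$. Neither of your routes produces such an $f$.

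\emph{The contraction.} Conjugating by $\exp(tE)$ only shows that $\n_i$ degenerates to $\n$. In other words, the associated graded of $\n_i$ for the filtration $\n_i\cap\g^k$ is $\n$. That is just Lemma~\ref{differential system}(3) again, and a degeneration gives no isomorphism $\n_1\to\n_2$.

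\emph{The composite.} The arrows $\n_1\to\n\to\n_2$ are either the projection along $\h$, which is not a Lie algebra homomorphism, or identifications of associated graded algebras, which are not maps defined on $\n_1$. So the composite is not a Lie algebra isomorphism respecting the filtrations.

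\emph{Adjusting by $G_0$.} Proposition~\ref{g_0} only identifies the Lie algebra of $G(\widehat{\mathscr{C}_o})$ with $\g_0$. It gives no element carrying one of your two transported cones onto the other.

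\emph{The fallback.} It has the same defect. An abstract isomorphism $\m_i(e)\cong\n$ lives on the associated graded of $(\n,E_i^{\bullet}(e))$, not on $\n$, and knows nothing about the cones.

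\textbf{What the paper supplies.} There are two ingredients you are missing.

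First, it normalizes each $E_i$. Since $\m_i(e)\cong\n$, there is an automorphism $\theta_i$ of $N$ with $\theta_i^*E_i=D_\n$. To see this, write $\n$ as the free Lie algebra on $\g_{-1}$ modulo a graded ideal $I$. Each $E_i^{-j+1}(e)$ is complementary in $\n$ to $\bigoplus_{k\le -j}\g_k$. Hence the homomorphism from the free Lie algebra extending the degree $-1$ part $\g_{-1}\to E_i(e)$ of an isomorphism $\n\cong\m_i(e)$ kills $I$, and descends to the required automorphism. Then $\beta_i=\alpha_i\circ\theta_i$ is an isomorphism $(N,D_\n)\to(O_i,D_\g|_{O_i})$ with $\beta_i(e)=o_i$.

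Second, the cure for your remark that the differential ``need not be a Lie algebra isomorphism'' is to pass to its symbol. Take $\varphi\in G$ with $\varphi(o_1)=o_2$; your $g_2g_1^{-1}$ works. Then $\beta_2^{-1}\circ\varphi\circ\beta_1$ is a local automorphism of $(N,D_\n)$ fixing $e$, and its differential at $e$ is filtered. The induced map $\tilde f$ on the symbol algebra is a graded Lie algebra automorphism of $\n$, and it agrees with the differential on $\g_{-1}$. Integrate $\tilde f$ to $\tilde F$ and set $F=\theta_2\circ\tilde F\circ\theta_1^{-1}$. This gives $d\Psi_{o_1}=d\varphi_{o_1}$ on $(D_\g)_{o_1}$, and equivariance spreads this to every $u\in O_1$.

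\textbf{Caveat on the cone.} The symbol agrees with the differential only on $\g_{-1}$. So this settles the cone condition immediately when $\widehat{\mathscr{C}_o}\subset\g_{-1}$, i.e.\ $V=\g_{-1}$. The paper writes $d\Psi_{o_1}=d\varphi_{o_1}$ on all of $T_{o_1}X$. For odd symplectic Grassmannians, where $V$ may be strictly larger than $\g_{-1}$, it is that full identity, not just its restriction to $D$, that the cone condition requires.
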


\begin{proof}
The proof is an adaptation of Proposition 16 in \cite{C}.

For each $i=1, 2$, Lemma \ref{differential system} implies that two graded Lie algebra $\m_i(e)$ and $\n$ are isomorphic. Hence, there exists a Lie group automorphism $\theta_i: N \to N$ such that $D_n = \theta_i^*(E_i)$.

Choose $\varphi \in G$ such that $\varphi(o_1) = o_2$. Since $D_\g$ is invariant under the action of $G$, $\varphi$ is an automorphism of $(X', D_\g)$. On the other hand, $\beta_i := \alpha_i \circ \theta_i$ is an isomorphism from $(N, D_\n)$ to $(O_i, D_\g|_{O_i})$ such that $\beta_i(e) = o_i$. It follows that $\beta_2^{-1} \circ \varphi \circ \beta_1$ defines a local automorphism of $(N, D_\n)$, i.e., an isomorphism between $(W_1, D_\n|_{W_1})$ and $(W_2, D_\n|_{W_2})$ for some open neighborhoods $W_1, W_2$ of $e$ in $N$. 

The differential $d (\beta_2^{-1})_{o_2} \circ d\varphi_{o_1} \circ d(\beta_1)_e$ induces an Lie algebra automorphism $\tilde{f}$ of the symbol algebra of $(N, D_\n)$ at $e$, which is the graded Lie algebra $\n$. Since $N$ is simply connected, $\tilde{f}$ integrates to a Lie group automorphism $\tilde{F} : N \to N$. Let $F = \theta_2 \circ \tilde{F} \circ \theta_1^{-1}$ and $f = dF_e$. Then $F:N \to N$ is a Lie group automorphism of $N$ and $f$ is a Lie algebra automorphism of $\n$. Moreover, 
\begin{align*}
(d\alpha_2)_e \circ f \circ d(\alpha_1^{-1})_{o_1} &=  (d\alpha_2)_e \circ d(\theta_2)_e \circ \tilde{f} \circ d(\theta_1^{-1})_{e} \circ d(\alpha_1^{-1})_{o_1}\\
&= (d\beta_2)_e \circ f \circ d(\beta_1^{-1})_{o_1}\\
&= d\varphi_{o_1}.
\end{align*}
We now show that $\Psi = \alpha_2 \circ F \circ \alpha_1^{-1}$ satisfies properties (1)-(3).

Property (1) is immediate since 
\[\Psi(o_1) = \alpha_2 \circ F \circ \alpha_1^{-1}(o_1) = \alpha_2\circ F(e) = \alpha_2(e) = o_2.\]

For property (2), 
\begin{align*}
\Psi(A_1(h, x)) &= \alpha_2\circ F \circ \alpha_1^{-1} \circ \phi_{1, h} (x)\\
&= \alpha_2 \circ F \circ \ell_h \circ \alpha_1^{-1}(x)\\
&= \alpha_2 \circ \ell_{F(h)} \circ F \circ \alpha_1^{-1}(x)\\
&= \phi_{2, F(h)} \circ \alpha_2 \circ F \circ \alpha_1^{-1}(x)\\
&= \phi_{2, F(h)} \circ \Psi(x)\\
&= A_2(F(h), \Psi(x)).
\end{align*}
Here, the second and fourth equalities follow from $\alpha_i \circ \ell_h = \phi_{i, h} \circ \alpha_i$, while the third equality follows from the fact that $F$ is a Lie group automorphism of $N$.

It remains to prove property (3). Let $h \in N$ such that $\alpha_1(h) = u$. Then 
\[d\Psi_u ((D_\g)_u) = d(\alpha_2)_{F(h)} \circ dF_{h} \circ d(\alpha^{-1}_1)_u ((D_\g)_u).\]
Since $D$ is invariant under $\phi_{1, h}$ and $\alpha_1 \circ \ell_h = \phi_{1,h} \circ \alpha_1$, 
\[(d\alpha_1^{-1})_u ((D_\g)_u) = d(\alpha^{-1}_1)_u \circ d(\phi_{1, h})_{o_1} ((D_\g)_{o_1}) = d(\ell_h)_e \circ d(\alpha_1^{-1})_{o_1} ((D_\g)_{o_1}).\]
The equalities $F \circ \ell_h = \ell_{F(h)} \circ F$ and $\alpha_2 \circ \ell_h = \phi_{2,h} \circ \alpha_2$ imply that 
\[d(\alpha_2)_{F(h)} \circ dF_{h} \circ d(\ell_h)_e =  d(\alpha_2)_{F(h)} \circ d(\ell_{F(h)})_e \circ dF_{e} =  d(\phi_{2,F(h)})_{o_2} \circ d(\alpha_2)_{e} \circ f.\]
In summary,
\[d\Psi_u ((D_\g)_u) = d(\phi_{2,F(h)})_{o_2} \circ d(\alpha_2)_{e} \circ f \circ d(\alpha_1^{-1})_{o_1} ((D_\g)_{o_1}).\]
Since $(d\alpha_2)_e \circ f \circ d(\alpha_1^{-1})_{o_1} = d\varphi_{o_1}$ and $D$ is invariant under $\phi_{2, F(h)}$,
\begin{align*}
d\Psi_u((D_\g)_u)&= d(\phi_{2,F(h)})_{o_2}\circ d\varphi_{o_1} ((D_\g)_{o_1})\\
&= d(\phi_{2,F(h)})_{o_2}((D_\g)_{o_2}) = (D_\g)_{\Psi(u)}.
\end{align*}
In the last equation, we use the equality 
\[\phi_{2, F(h)}(o_2) = A_2(F(h), o_2) = \alpha_2(F(h)) = \Psi(\alpha_1(h)) = \Psi(u).\]

We can apply the same argument to $\mathscr{C}$ since $\mathscr{C}$ is preserved by the action of $G$.
\end{proof}

Now, we prove the uniqueness of EC-structures of $N$ on $X$.

\begin{theorem}
\label{uniqueness}
Any two EC-structures of \(N\) on \(X\) are isomorphic.
\end{theorem}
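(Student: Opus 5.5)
Starting from Lemma \ref{local isomorphism}, we have a Lie group automorphism $F$ of $N$ and a biholomorphism $\Psi = \alpha_2\circ F\circ\alpha_1^{-1} : O_1 \to O_2$. This map is $F$-equivariant with respect to $A_1$ and $A_2$, and it preserves both $D_\g$ and $\mathscr{C}$. The plan is to show that $\Psi$ is the restriction of some element $\varphi' \in G = \mathrm{Aut}(X)$. Once this is done, $\Psi$ extends to a biregular automorphism of $X$. Property (2) of Lemma \ref{local isomorphism} holds on the dense open set $O_1$, so it holds on all of $X$ by continuity. Hence $(F,\varphi')$ is an isomorphism of the two EC-structures.

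To show that $\Psi$ comes from $G$, I use Proposition \ref{infinitesimal automorphism}. Let $U \subset O_1$ be a connected open set. Since $\Psi$ preserves $D_\g$ and $\mathscr{C}$, pushforward $\Psi_*$ sends $\mathcal{A}_\mathscr{C}(U)$ isomorphically onto $\mathcal{A}_\mathscr{C}(\Psi(U))$. Both spaces are identified with $\g$ via fundamental vector fields. So $\Psi_*$ gives a Lie algebra automorphism $\sigma$ of $\g$, characterized by
\[
\Psi_*\tilde v = \widetilde{\sigma(v)} \quad\text{on } O_2 .
\]
This is independent of $U$ because holomorphic vector fields are determined on open subsets. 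Next, choose $\varphi \in G$ with $\varphi(o_1)=o_2$, and set $\Phi := \varphi^{-1}\circ\Psi$. This map is defined on $O_1$, fixes $o_1$, preserves $D_\g$ and $\mathscr{C}$, and normalizes the fundamental vector fields of $\g$. Its automorphism of $\g$ fixes $\h$, since $\h$ consists exactly of the fundamental fields vanishing at $o_1$.

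The key step is to show that $\Phi$ agrees with the action of some element $h_0$ of the isotropy group $H_{o_1}$. The map $\Phi$ intertwines the local $G$-action with its twist by $\sigma' := \mathrm{Ad}(\varphi^{-1})\circ\sigma$. Concretely, $\Phi(\exp(v)\cdot x) = \exp(\sigma'(v))\cdot\Phi(x)$ for $x$ near $o_1$ and $v$ small. Therefore $\Phi$ is determined near $o_1$ by $\sigma'$, via $\Phi(\exp(v)\cdot o_1) = \exp(\sigma'(v))\cdot o_1$. So it suffices to show that $\sigma'$ is inner, namely $\sigma' = \mathrm{Ad}(h_0)$ for some $h_0 \in H_{o_1}$. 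Then $\Phi$ and $h_0$ agree near $o_1$, hence on $O_1$ by the identity theorem.

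To prove that $\sigma'$ is of this form, I use the Tanaka-theoretic rigidity. The automorphism $\sigma'$ preserves $\h$ and the filtration $\g^i$ induced by $D_\g$ at $o_1$. Its associated graded map on $\n \oplus \g_0$ is determined by $d\Phi_{o_1}$, which preserves $\widehat{\mathscr{C}_{o_1}}$. By Proposition \ref{g_0}, this component lies in the group $G(\widehat{\mathscr{C}_o})$, whose Lie algebra is $\g_0$. I expect to realize it by an element of the Levi part of $H$, up to finitely many components. I then plan to absorb these components using the explicit description of $\mathrm{Aut}(X)$ in Theorem \ref{Pasquier}: every automorphism of $\g$ preserving the structure should be induced by $G$, since $L$ is of type $B$, $C$, $F$ or $G$ and so has no diagram automorphisms. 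Composing with such an element, I reduce to the case where $\sigma'$ is the identity on the graded pieces of $\g$. Since $\g$ is the prolongation of $(\n,\g_0)$ (Proposition \ref{prolongation}), such a map is $\mathrm{Ad}$ of an element of $\exp(\g^1) \subset H$.

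I expect this identification of $\sigma'$ with $\mathrm{Ad}(h_0)$ to be the main obstacle: handling disconnectedness of $G(\widehat{\mathscr{C}_o})$ and showing that filtered automorphisms are inner. A simpler alternative I would try first is Cheong's argument. Using Lemma \ref{Tanaka}, one can view $\varphi^{-1}\circ\Psi$ as a local automorphism of $(N,D_\n)$ preserving $\mathscr{C}$ whose differential at $e$ equals the identity, by the construction of $F$ in Lemma \ref{local isomorphism}. The goal would then be that its higher-order jets are all matched by $\exp(\g^1)$, so that after composing with an element of $H$ it becomes the identity.
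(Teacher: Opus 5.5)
Your reduction is sound. If $\sigma'=\mathrm{Ad}(h_0)$ with $h_0$ in the isotropy group at $o_1$, then the relation $\Phi(\exp(v)\cdot o_1)=\exp(\sigma'(v))\cdot o_1$ and the identity theorem give $\Phi=h_0$ on $O_1$, and density finishes. But this identification is the whole content of the theorem, and the proposal does not prove it. Two steps are stated only as expectations: realizing the graded part of $\sigma'$ in $H$ ``up to finitely many components'', and the claim that every structure-preserving automorphism of $\g$ comes from $G$ ``since $L$ has no diagram automorphisms''. The second needs an actual argument, because $\g=(\mathfrak{l}\oplus\C)\rhd V$ is not semisimple. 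Even granting $\sigma'=\mathrm{Ad}(g)$ with $g\in G$, you would still have to show that $g$ fixes $o_1$. The component group of $G(\widehat{\mathscr{C}_o})$ is left uncontrolled, and the fallback jet-matching argument is only a goal. So the argument stops exactly at its key step.

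The missing idea is that the paper never needs $\sigma$ to be inner. It integrates $\sigma$ to an automorphism of the universal cover $\tilde G$. This automorphism maps the algebra $\h_1$ of fields vanishing at $o_1$ onto $\h_2$. The isotropy groups of $\tilde G$ are the connected subgroups $\tilde H_i$; this holds because $X'$ is simply connected, $Z$ having codimension at least $2$. So one gets a biholomorphism $\tilde G/\tilde H_1\to\tilde G/\tilde H_2$ of $X'$. It agrees with $\Psi$ on $O_1$ by the same intertwining you noted, and it extends to an automorphism of $X$ because $\operatorname{codim}Z\ge 2$ and $X$ is Fano.

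Your route can also be closed without Tanaka theory or $G(\widehat{\mathscr{C}_o})$. By Levi--Malcev, compose any automorphism of $\g$ with some $\mathrm{Ad}(\exp v)$, $v\in V$, so that it preserves $\mathfrak{l}$. Then compose with some $\mathrm{Ad}(l)$, $l\in L$, so that it is the identity on $\mathfrak{l}$; this uses that $\mathrm{Aut}(\mathfrak{l})$ is inner in types $B$, $C$, $F$, $G$. Schur's lemma, plus the fact that the $\C$ factor acts on $V$ by nonzero scalars, shows that what remains is $\mathrm{Ad}$ of an element of $\C^*$. Hence $\sigma=\mathrm{Ad}(g)$ with $g\in G$. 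Then $g^{-1}\circ\Psi$ commutes with all fundamental vector fields, so $g^{-1}(o_2)$ is fixed by the identity component of the isotropy group at $o_1$. That group contains a $G$-conjugate of a Borel subgroup $B$ of $L$. Now $B$ has exactly two fixed points on $X$, one on each closed $L$-orbit, and $Z$ is $G$-stable, so the only fixed point of this conjugate in $X'$ is $o_1$. Thus $\Psi=g|_{O_1}$. This trades the paper's topological input (universal cover, $\pi_1(X')=1$, extension across $Z$) for the explicit structure of $\g$.
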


\begin{proof}
The proof is an adaptation of Proposition 13 and Theorem 17 in \cite{C}. Lemma \ref{local isomorphism} gives a biholomorphic map 
\[\Psi : O_1 \to O_2\]
between open orbits. Since \(d\Psi\) preserves both $D_\g$ and the VMRT, $d\Psi$ induces a Lie algebra isomorphism from $\mathcal{A}_\mathscr{C}(O_1)$ to \(\mathcal{A}_\mathscr{C}(O_2)\). We also denote the induced isomorphism by $d\Psi$. Under the identification 
\[\mathcal{A}_\mathscr{C}(O_1) = \g,\quad \mathcal{A}_\mathscr{C}(O_2)= \g\]
in Proposition \ref{infinitesimal automorphism}, $d\Psi$ is an automorphism of $\g$. For each $i = 1, 2$, let $\mathfrak{h}_i$ be the subalgebra of $\mathcal{A}_\mathscr{C}(O_i)$ consisting of all vector fields vanishing at $o_i$. Since $\Psi(o_1) = o_2$, 
\[d\Psi (\mathfrak{h}_1) = \mathfrak{h}_2.\]

Let \(\tilde G\) be a universal covering group of \(G\) and let \(\tilde H_i\) be a connected subgroup of \(\tilde G\) corresponding to \(\mathfrak{h}_i\). Then \(\tilde H_i\) is the isotropy subgroup of \(\tilde G\) at \(o_i\). Hence, the automorphism $d\Psi$ of \(\mathfrak{g}\) induces a biholomorphic map 
\[\widetilde\Psi : \tilde G / \tilde H_1 \to \tilde G / \tilde H_2,\] 
which naturally extends $\Psi$.

Note that the complement of $X'$ in $X$ is the closed orbit $Z$. The codimension of $Z$ is at least 2 and $X$ is Fano. Hence, $\widetilde{\Psi}$ can be extended to an automorphism $\Phi$ of the entire space $X$.

To complete the proof, consider two morphisms  $\Phi \circ A_1$ and $A_2 \circ (F, \Phi)$ from $N \times X$ to $X$. For $(h, x) \in N \times O_1$, 
\[A_2(F(h), \Phi(x)) = A_2(F(h), \Psi(x)) = \Psi(A_1(h, x)) = \Phi(A_1(h,x)).\]
Hence, $\Phi \circ A_1$ and $A_2 \circ (F, \Phi)$ coincide on a dense open subset \(N \times O_1\). It follows that \(\Phi \circ A_1 = A_2 \circ (F, \Phi)\) on the entire space \(N \times X\).
\end{proof}

\begin{proof}[Proof of Theorem \ref{main}]
The theorem follows from Proposition \ref{existence} and Theorem \ref{uniqueness}.
\end{proof}

Since $X$ is a projective uniruled manifold of Picard number 1, the Cartan-Fubini type extension theorem can be applied to prove the uniqueness of EC-structures of $N$ on $X$. However, our method applies more generally to projective compactifications of the open $G$-orbit $X'$ to which the action of $G$ on $X'$ extends.

Natural candidates for such compactifications are blowups of $X$. Recall that $X$ has two closed $L$-orbits, $Y$ and $Z$, where $Y$ is contained in the open $G$-orbit and $Z$ is the closed $G$-orbit. Since $Z$ is stable under the action of $G$, the action of $G$ on $X$ lifts naturally to the blowup of $X$ along $Z$. In contrast, since $Y$ is not $G$-stable, the $G$-action need not lift to the blowup of $X$ along $Y$.

\begin{theorem}
\label{blowup}
Let $X$ be a smooth projective nonhomogeneous horospherical variety of Picard number 1. Let $Z$ be the unique closed $G$-orbit, and let $\tilde{X}$ be the blowup of $X$ along $Z$. Then $\tilde{X}$ admits a unique EC-structure of $N$.
\end{theorem}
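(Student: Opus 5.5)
We mirror the proof of Theorem \ref{main}, replacing $X$ by $\tilde{X}$ throughout. Let $\pi:\tilde{X}\to X$ be the blowup along $Z$. Since $Z$ is $G$-stable, the $G$-action lifts to $\tilde{X}$. Moreover, $\pi$ is a $G$-equivariant isomorphism over $X' = X\setminus Z$. Hence $\tilde{X}$ is a smooth projective $G$-variety whose open $G$-orbit $\pi^{-1}(X')\cong X'$ carries the same differential system $D_\g$ and the same (pulled back) VMRT structure $\mathscr{C}$. The complement of this orbit is the exceptional divisor $E=\pi^{-1}(Z)$.

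\textbf{Existence.} The $N$-orbit of $o$ in $\tilde{X}$ is $\pi^{-1}(N\cdot o)$. By Proposition \ref{existence}, $N\cdot o$ is open, so this orbit is Zariski open and equivariantly isomorphic to $N$. This gives an EC-structure of $N$ on $\tilde{X}$.

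\textbf{Uniqueness.} Let $A_1,A_2$ be two EC-structures of $N$ on $\tilde{X}$, with open orbits $O_1,O_2$. First we check that $N$ acts through $G$. The connected group $\mathrm{Aut}^0(\tilde{X})$ preserves the exceptional divisor $E$ up to algebraic equivalence, so it acts on $X$ via $\pi$. Concretely, $E$ is the only irreducible negative-type divisor contracted by $\pi$, and $\pi$ is the contraction of an extremal ray, so $\pi$ is $\mathrm{Aut}^0(\tilde{X})$-equivariant. Hence $\mathrm{Aut}^0(\tilde{X})\subset G$. Consequently each $A_i$ descends to an EC-structure on $X$ and preserves $E$. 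Therefore $O_i\subset \tilde{X}\setminus E\cong X'$.

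Now Lemma \ref{differential system} and Lemma \ref{local isomorphism} apply verbatim. They only use that the $N$-action is by elements of $G$ and that $D_\g$ and $\mathscr{C}$ are $G$-invariant on $X'$. They produce an automorphism $F$ of $N$ and an $F$-equivariant biholomorphism $\Psi:O_1\to O_2$ preserving $D_\g$ and $\mathscr{C}$. Proposition \ref{infinitesimal automorphism} is a statement about connected open subsets of $X'$ alone, so it still gives $\mathcal{A}_\mathscr{C}(O_i)=\g$. As in Theorem \ref{uniqueness}, $d\Psi$ is then an automorphism of $\g$ sending $\h_1$ to $\h_2$. It therefore yields a biholomorphism $\widetilde{\Psi}$ of $X'$ extending $\Psi$, which is equivariant with respect to the automorphism $d\Psi$ of $\g$.

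\textbf{Extension to $\tilde{X}$: the main obstacle.} In Theorem \ref{uniqueness} we used that $Z$ has codimension at least $2$ in the Fano manifold $X$. Here $E$ is a divisor, so that argument is unavailable. Instead we go through $X$.

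First, $\widetilde{\Psi}$ extends to an automorphism $\Phi\in G$ of $X$, exactly by the argument of Theorem \ref{uniqueness} applied on $X$.

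Second, $\Phi$ preserves $Z$, since $Z$ is $G$-stable by Proposition \ref{stable}. By the universal property of blowing up, $\Phi$ therefore lifts to an automorphism $\tilde{\Phi}$ of $\tilde{X}$ that agrees with $\widetilde{\Psi}$ on $\pi^{-1}(X')$.

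Finally, $\tilde{\Phi}\circ A_1$ and $A_2\circ(F\times\tilde{\Phi})$ agree on the dense open set $N\times O_1$. Hence they agree on all of $N\times \tilde{X}$, which gives the required isomorphism of EC-structures.

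The delicate point is the earlier claim that any EC-action of $N$ on $\tilde{X}$ factors through $G$, i.e.\ that $\pi$ is $\mathrm{Aut}^0(\tilde{X})$-equivariant. I would prove this by noting that $\pi$ is the contraction of an extremal ray of $\overline{NE}(\tilde{X})$. A connected group acts trivially on $N_1(\tilde{X})$, so it preserves every contraction.
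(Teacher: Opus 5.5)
Your proof is correct and follows the paper's overall strategy. The paper's entire proof is the remark that $\tilde{X}$ is a projective compactification of $X'$ to which the $G$-action extends, so the arguments of Proposition \ref{existence} and Theorem \ref{uniqueness} carry over. Where you go beyond this is in making explicit two steps that the remark does not literally cover.

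First, on $X$ every algebraic $N$-action lies in $G=\mathrm{Aut}(X)$ by definition, but on $\tilde{X}$ this has to be proved. You prove it by showing that $\pi$, as an extremal contraction, is $\mathrm{Aut}^0(\tilde{X})$-equivariant (in effect Blanchard's lemma). This is exactly what guarantees that the open orbits $O_i$ avoid $E$ and that the maps $\phi_{i,h}$ preserve $D_\g$ and $\mathscr{C}$.

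Second, the extension step in Theorem \ref{uniqueness} relies on the complement of $X'$ having codimension at least $2$ in a Fano manifold. That fails for the divisor $E\subset\tilde{X}$. You get around it by extending $\widetilde{\Psi}$ to some $\Phi\in G$ on $X$ and then lifting $\Phi$ via the universal property of the blowup. This is a concrete form of the principle implicit in the paper, that an element of $G$ acts on every $G$-equivariant compactification of $X'$.

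The paper's phrasing is aimed at arbitrary such compactifications, and there your first step would have to become an extra hypothesis. For $\tilde{X}$ itself, your version is the more complete argument.

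There is also a shortcut available to you. Once you know the actions descend to EC-structures on $X$, you can apply Theorem \ref{uniqueness} directly on $X$ and lift the resulting $\Phi$, without rerunning the lemmas of the uniqueness section.
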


\begin{proof}
The variety $\tilde{X}$ is a projective compactification of $X'$ and the action of $G$ on $X'$ extends to $\tilde{X}$. Therefore, the assertion follows from the arguments used in the proofs of Proposition \ref{existence} and Theorem \ref{uniqueness}.
\end{proof}

\end{document}